\declaretheorem[
name = Theorem,
]{theorem}
\declaretheorem[
name = Corollary,
sibling = theorem
]{corollary}
\declaretheorem[
name = Proposition,
sibling = theorem
]{proposition}
\declaretheorem[
name = Claim,
numbered = no
]{claim*}
\declaretheoremstyle[%
  spaceabove=-6pt,%
  spacebelow=6pt,%
  headfont=\normalfont\itshape,%
  postheadspace=1em,%
  qed=$\blacksquare$,%
  headpunct={.}
]{mystyle}
\declaretheorem[
name = Example,
sibling = theorem,
style=definition
]{example}
\declaretheorem[
name = Definition,
sibling = theorem,
style=definition
]{definition}
\declaretheorem[
name = Question,
sibling = theorem,
style=definition
]{question}
\declaretheorem[
name = Conjecture,
sibling = theorem,
style = definition
]{conjecture}
   \tikzset{
   modal/.style={>=Stealth,shorten >=1pt,shorten <=1pt,auto,node distance=1.5cm,
   semithick},
   world/.style={circle,draw,minimum size=0.5cm,fill=gray!15},
   point/.style={circle,draw,inner sep=0.5mm,fill=black},
   reflexive above/.style={->,loop,looseness=7,in=120,out=60},
   reflexive below/.style={->,loop,looseness=7,in=240,out=300},
   reflexive left/.style={->,loop,looseness=7,in=150,out=210},
   reflexive right/.style={->,loop,looseness=7,in=30,out=330}
   }
\newcommand{\VDash}{%
  \mathrel{
    \text{\clipbox{0pt 0pt {.8\width} 0pt}{$\Vdash$}}
    \mkern.9mu
    \text{\adjustbox{width=.87\width,height=\height}{$\vDash$}}
  }
}
\renewcommand{\VDash}{\Vdash}
\begin{document}
    
\title[De Jongh's Theorem for IZF]{De Jongh's Theorem for intuitionistic Zermelo-Fraenkel set theory}
\author{Robert Passmann} %
\address{Institute for Logic, Language and Computation (ILLC), Faculty of Science, University of Amsterdam, ORCID: 0000-0002-7170-3286}
\email{robertpassmann@posteo.de}
\thanks{I would like to thank Lorenzo Galeotti, Joel Hamkins, Rosalie Iemhoff, Dick de Jongh, Yurii Khomskii, and Benedikt Löwe for helpful and inspiring discussions. This research was partially supported by the \emph{Studienstiftung des deutschen Volkes}.}
\date{\today}

\maketitle

\begin{abstract}
    We prove that the propositional logic of intuitionistic set theory $\IZF$ is intuitionistic propositional logic $\IPC$. More generally, we show that $\IZF$ has the de Jongh property with respect to every intermediate logic that is complete with respect to a class of finite trees. The same results follow for $\CZF$.
\end{abstract}

\section{Introduction}

De Jongh's classical theorem \cite{deJongh1970} states that the propositional logic of Heyting Arithmetic $\HA$ is intuitionistic logic $\IPC$. In this work, we will prove de Jongh's theorem for intuitionistic Zermelo-Fraenkel set theory $\IZF$. That is, for all propositional formulas $\phi$:

\begin{center}
    $\IPC \vdash \phi$ if and only $\IZF \vdash \phi^\sigma$ for all substitutions $\sigma$.
\end{center}

To prove this result, we introduce a new semantics for $\IZF$, the so-called \emph{blended Kripke models}, or \emph{blended models} for short. These models are inspired by the constructions of Iemhoff \cite{Iemhoff2010} and Lubarsky \cite{Lubarsky2005, Lubarsky2018, LubarskyDiener2014, LubarskyRathjen2008}, and combine Kripke semantics with classical models of set theory.

Passmann \cite{PassmannDJP} showed that Iemhoff's models for bounded constructive set theory allow to prove de Jongh's theorem for a very weak set theory, strictly weaker than $\CZF$. This is because Iemhoff's construction allows to choose a classical model of set theory at every node of the Kripke frame, thus retaining a great amount of control of the set-theoretic model on top. However, it can be shown that the same method cannot yield de Jongh's theorem for stronger set theories, such as $\CZF$ or $\IZF$.

Moreover, Passmann \cite[Chapter 4]{Passmann2018} proved that the propositional logic of those of Lubarsky's models that are based on a Kripke frame with end-nodes contains the intermediate logic $\mathbf{KC}$. Consequently, the Lubarsky models based on such a frame cannot be used to prove de Jongh properties with respect to logics weaker than $\mathbf{KC}$, such as $\IPC$. 

The blended models have more flexibility than Lubarsky's models and model a stronger set theory than Iemhoff's models, and can therefore be used to prove de Jongh's theorem for $\IZF$. 

In fact, we will prove a stronger result concerning the \emph{de Jongh property} for $\IZF$: The de Jongh property is a generalisation of de Jongh's theorem for arbitrary theories and intermediate logics (see \Cref{Definition:de Jongh property}). Our main result can be stated as follows.

\begin{theorem}[\Cref{Theorem: IZF de Jongh for finite trees}]
    Intuitionistic set theory $\IZF$ has the de Jongh property with respect to every intermediate logic $\mathbf{J}$ that is characterised by a class of finite trees. 
\end{theorem}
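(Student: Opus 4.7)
The plan is to establish the biconditional via the standard de Jongh strategy, with the blended Kripke models of this paper doing the semantic work. The easy direction---if $\mathbf{J} \vdash \phi$ then $\IZF \vdash \phi^{\sigma}$ for every substitution $\sigma$---will reduce to soundness of $\IZF$ for the class of blended models built over the characterising finite trees of $\mathbf{J}$. The substance of the theorem lies in the converse: whenever $\mathbf{J} \not\vdash \phi$, one must produce a substitution $\sigma$ witnessing $\IZF \not\vdash \phi^{\sigma}$.

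For the hard direction, I would fix a propositional formula $\phi$ with $\mathbf{J} \not\vdash \phi$. By completeness of $\mathbf{J}$ with respect to a class of finite trees, there exist a finite tree $T$ in that class, a propositional valuation $V$ on $T$, and a node $w \in T$ such that $(T,V), w \not\Vdash \phi$. I would then build a blended Kripke model $\mathcal{M}$ whose underlying frame is $T$ and which satisfies $\IZF$ at every node. The central step is then a \emph{realisation lemma}: for each propositional variable $p$ occurring in $\phi$, define a set-theoretic sentence $\sigma(p)$ whose forcing pattern in $\mathcal{M}$ matches $V(p)$ exactly, i.e.\ $\mathcal{M}, v \Vdash \sigma(p)$ if and only if $(T,V), v \Vdash p$ for every node $v$. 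An induction on the logical complexity of $\phi$ then transports the refutation at $w$ from $(T,V)$ to $\mathcal{M}$, and soundness yields $\IZF \not\vdash \phi^{\sigma}$.

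The main obstacle is the realisation step. The paper already notes that Iemhoff's models admit enough control over the classical data at each node but are too weak to model $\IZF$, whereas Lubarsky's models based on frames with end-nodes validate $\mathbf{KC}$ and so cannot realise every propositional valuation on such frames. The blended models should combine the virtues of both, but one must verify that the sets used to encode $V(p)$ can be installed uniformly across $T$ without disturbing the more delicate axioms of $\IZF$---in particular collection, separation, and power set---and that the resulting sentences $\sigma(p)$ are genuine set-theoretic sentences independent of the model. Fine-tuning these encoding sets so that the persistence of $\sigma(p)$ along the tree mirrors that of $V(p)$ at every node simultaneously will be the technical heart of the argument.
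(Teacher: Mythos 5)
Your skeleton is exactly the paper's: the left-to-right direction is immediate from the definition of $\IZF(\mathbf{J})$, and the converse goes by taking a finite tree $(K,\leq)$ and valuation $V$ refuting $\phi$, building a blended model on $(K,\leq)$, realising $V$ by set-theoretic sentences, and transferring the countermodel by induction on $\phi$. But the step you flag as ``the technical heart'' --- the realisation lemma, i.e.\ faithfulness of the blended model --- is precisely where all the content of the theorem lives, and you leave it as an open obstacle rather than proving it. As stated, your argument reduces the theorem to an unproved lemma. Moreover, your framing of that lemma points in a slightly wrong direction: you speak of ``installing'' encoding sets for each $V(p)$ and worry that this could disturb Collection, Separation or Power Set. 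In the paper nothing is installed at all; the model is built once and for all, and faithfulness is witnessed by sentences whose truth sets happen to be the right upsets, so there is no tension with the axioms to resolve.

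Concretely, the paper realises an arbitrary upset $K^{\geq v}$ (general upsets are finite unions of these, handled by disjunction) as the truth set of a single sentence $\chi_v$ built from two ingredients. First, the sentences $\psi_{n+1}$ asserting that among any $n+1$ subsets of $1$ two are equal: in any blended model the subsets of $1$ at a node $v$ are exactly the characteristic functions $1^v_X$ of upsets $X \subseteq K^{\geq v}$, so $v \VDash \psi_{n+1}$ iff $n \geq U_v$, where $U_v$ is the number of upsets above $v$ (\Cref{Proposition: counting upsets}). Second, the end-node models are chosen as generic extensions $M[G_i]$ of a fixed countable transitive model with $2^{\aleph_0} = \aleph_{i+1}$ in $M[G_i]$, so that each end-node is picked out by a sentence $\phi_i$ and each non-end-node forces $\neg\phi_i$ exactly when $e_i$ does not lie above it (\Cref{Proposition: blended model endnodes labelling}). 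The combinatorial fact that in a finite tree a node is uniquely determined by $U_v$ together with the set of end-nodes above it (\Cref{Proposition: uniquely determined nodes}) then shows that $\chi_v = \psi_{U_v+1} \wedge \bigwedge_{e_i \not\geq v} \neg\phi_i$ has truth set exactly $K^{\geq v}$. Without these (or equivalent) ingredients, your proposal does not yet constitute a proof.
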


In particular, $\IZF$ has the de Jongh property with respect to the logics $\IPC$, $\LC$, $\mathbf{T}_n$ and $\mathbf{BD}_n$. As constructive set theory $\CZF$ is a subtheory of $\IZF$, all of these results also apply for $\CZF$ (see \Cref{Corollary: CZF has dJP with finite trees}).

The article is organised as follows. \Cref{Section: Preliminaries} discusses the necessary preliminaries for this article. We introduce blended models in \cref{Section: blended models} and prove that they satisfy intuitionistic set theory $\IZF$. In \cref{Section: Propositional logic of blended models}, we consider the propositional logic of blended models and prove de Jongh's theorem for $\IZF$. We draw some conclusions and state a few questions for further research in \cref{Section: Conclusions}.

\section{Preliminaries}
\label{Section: Preliminaries}

In this section, we will discuss the preliminaries for the later sections. After briefly discussing notation and intermediate logics in \Cref{Section: Notation} and \Cref{Section: Intuitionistic and intermediate logics}, respectively, we will introduce Kripke semantics for intuitionistic propositional logic in \Cref{Section: Kripke frames}. We will then discuss the de Jongh property in \Cref{Section: Loyalty faithfulness and dJP}. 

\subsection{Notation and meta-theory}
\label{Section: Notation}

We adopt the following notational policy: The symbol $\VDash$ will be used for the forcing relation of Kripke models. As usual, we will use $\vDash$ for the classical modelling relation, and $\vdash$ for the provability relation.

The meta-theory of this article is classical Zermelo-Fraenkel set theory $\ZFC$ assuming the existence of a countable transitive model of set theory.

\subsection{Intuitionistic and intermediate logics} 
\label{Section: Intuitionistic and intermediate logics}

We fix a countable set $\Prop$ of propositional variables for the scope of this article, and identify propositional logics $\mathbf{L}$ with the set of formulas they prove (i.e., $\mathbf{L} \vdash \phi$ if and only if $\phi \in \mathbf{L}$). As usual, we denote intuitionistic propositional logic by $\IPC$, and classical propositional logic by $\CPC$. We say that a logic $\mathbf{J}$ is an intermediate logic if $\IPC \subseteq \mathbf{J} \subseteq \CPC$ (in particular, $\IPC$ and $\CPC$ are considered intermediate logics here). Intuitionistic predicate logic is called $\IQC$.

\subsection{Kripke frames}
\label{Section: Kripke frames}

We will now introduce Kripke frames for intuitionistic logic. In particular, we will focus on Kripke frames that are trees because these will be relevant for us later.

\begin{definition}
    A \emph{Kripke frame} $(K,\leq)$ is a partial order. We call a Kripke frame $(K,\leq)$ a \emph{tree} if for every $v \in K$, the set $K^{\leq v} = \Set{w \in K}{w \leq v}$ is well-ordered by $<$, and moreover, if there is a node $r \in K$ such that $r \leq v$ for all $v \in K$ (i.e., $K$ is \emph{rooted}). A Kripke frame is called \emph{finite} whenever $K$ is finite.
\end{definition}

\begin{definition}
    Given a Kripke frame $(K,\leq)$, we define $E_K$ to be its set of end-nodes, i.e., the set of those $e \in K$ that are maximal with respect to $\leq$. A Kripke frame $(K,\leq)$ \emph{with end-nodes} is a Kripke frame such that for every $v \in K$ there is some $e \in E_K$ with $v \leq e$. Given a node $v \in K$, let $E_v$ denote the set of all end-nodes $e \in K$ such that $e \geq v$.
\end{definition}

The following combinatorial proposition will be useful later when we will determine the propositional logic of certain Kripke models. An \emph{upset $X$} in a Kripke frame $(K,\leq)$ is a set $X \subseteq K$ such that $v \in X$ and $v \leq w$ implies $w \in X$. Given a finite tree $(K,\leq)$ and a node $v \in K$, let $U_v$ be the number of upsets $X \subseteq K^{\geq v}$, where $K^{\geq v} = \Set{w \in K}{w \geq v}$.

\begin{proposition}\label{Proposition: uniquely determined nodes}
    In a finite tree $(K,\leq)$, every node $v$ is uniquely determined by $U_v$ and the set of end-nodes $e$ with $e \geq v$.
\end{proposition}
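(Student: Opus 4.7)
The plan is to argue by contradiction: suppose $v \neq w$ are distinct nodes of the finite tree with $U_v = U_w$ and $E_v = E_w$. I would first establish that $v$ and $w$ must be comparable. Since $K$ is finite, iterating the ``pick a strict successor'' operation from $v$ must terminate at an end-node, so $E_v$ is nonempty. Picking $e \in E_v = E_w$, both $v$ and $w$ lie in $K^{\leq e}$, which is well-ordered and therefore linearly ordered by $\leq$; hence $v$ and $w$ are comparable. Without loss of generality, $v < w$, whence $K^{\geq w} \subsetneq K^{\geq v}$, with the element $v$ witnessing the strict inclusion.

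The remaining task is to derive a contradiction by establishing $U_v > U_w$. Consider the map sending an upset $Y \subseteq K^{\geq w}$ to $Y$ regarded as a subset of $K^{\geq v}$. I claim each such image is in fact an upset of $K^{\geq v}$: if $x \in Y$ and $y \geq x$ in $K^{\geq v}$, then $y \geq x \geq w$ places $y$ in $K^{\geq w}$, and so $y \in Y$ by the upset property of $Y$. This map is obviously injective. However, $K^{\geq v}$ is itself an upset of $K^{\geq v}$ and it contains $v$, which does not belong to $K^{\geq w}$; therefore $K^{\geq v}$ is not in the image of the map. This gives $U_v \geq U_w + 1$, contradicting $U_v = U_w$.

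The main technical point is the strictness in the last step: one must see that the proper containment $K^{\geq w} \subsetneq K^{\geq v}$ produces at least one extra upset on the $v$-side, for which the full upset $K^{\geq v}$ is the cleanest witness. The rest is bookkeeping from the tree structure, and in particular the role of the hypothesis $E_v = E_w$ is precisely to force the comparability of $v$ and $w$, which is what converts coincidence of the counts $U_v$ and $U_w$ into an honest contradiction.
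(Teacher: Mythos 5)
Your proof is correct, but it takes a different route from the paper. The paper argues by induction on the construction of finite trees: the base case is a single node, and in the inductive step one adds a new root below one or more subtrees, observing that uniqueness within and across the branches is preserved and that the new root has strictly more upsets $X \subseteq K^{\geq v}$ than any other node. You instead give a direct proof by contradiction, and the two ingredients you isolate are worth noting. First, the reduction to the comparable case: two nodes sharing an end-node $e$ both lie in the linearly ordered set $K^{\leq e}$, so equality of the end-node sets forces $v$ and $w$ to be comparable --- this is exactly where the tree hypothesis enters, and it replaces the branch-separation that the paper's induction handles implicitly. Second, the strict monotonicity $v < w \Rightarrow U_v > U_w$, which you prove by the identity injection on upsets together with the witness $K^{\geq v}$ (an upset containing $v \notin K^{\geq w}$, hence outside the image); this is the same core fact the paper invokes for its new root, but you verify it for an arbitrary comparable pair and spell out why the inclusion of upset families is proper. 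The net effect is a self-contained, non-inductive argument that is somewhat more detailed than the paper's sketch, at the cost of being less structural; both proofs ultimately rest on the same counting observation.
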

\begin{proof}
    This follows by an easy induction on trees. The base case for a tree with one node is trivial. For the construction step, where we add a new root below one or many trees, observe that the uniqueness within the branches is preserved and for the new bottom node as well, as it has strictly more up-sets $X \subseteq K^{\geq v}$ than any other node in the tree. 
\end{proof}

A \emph{Kripke model for $\IPC$} is a triple $(K,\leq,V)$ such that $(K,\leq)$ is a Kripke frame and $V: \Prop \to \P(K)$ a valuation that is persistent, i.e., if $w \in V(p)$ and $w \leq v$, then $v \in V(p)$. We can then define, by induction on propositional formulas, the forcing relation $\VDash$ for propositional logic at a node $v \in K$ in the following way:

\begin{enumerate}
    \item $K,V,v \VDash p$ if and only if $v \in V(p)$,
    \item $K,V,v \VDash \phi \wedge \psi$ if and only if $K,V,v \VDash \phi \text{ and } K,V,v \VDash \psi$,
    \item $K,V,v \VDash \phi \vee \psi$ if and only if $K,V,v \VDash \phi \text{ or } K,V,v \VDash \psi$,
    \item $K,V,v \VDash \phi \rightarrow \psi$ if and only if \\ \phantom{xxx} for all $w \geq v$, $K,V,w \VDash \phi$ implies $K,V,w \VDash \psi$,
    \item $K,V,v \VDash \bot$ holds never.
\end{enumerate}
    Sometimes we will write $v \VDash \phi$ instead of $K, V, v \VDash \phi$, and $K, V \VDash \phi$ if $K,V,v \VDash \phi$ holds for all $v \in K$. A formula $\phi$ is \emph{valid in $K$} if $K, V, v \VDash \phi$ holds for all valuations $V$ on $K$ and $v \in K$, and $\phi$ is \emph{valid} if it is valid in every Kripke frame $K$.

By induction on formulas, one can prove that the persistence of the propositional variables transfers to all formulas.

\begin{proposition}[Persistence]
    Let $(K,\leq,V)$ be a Kripke model for $\IPC$, $v \in K$ and $\phi$ be a propositional formula such that $K,v \VDash \phi$ holds. Then $K, w \VDash \phi$ holds for all $w \geq v$. \qed
\end{proposition}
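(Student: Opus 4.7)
The plan is to proceed by induction on the complexity of the propositional formula $\phi$, as is standard. The persistence of $V$ on propositional variables gives the base case for atoms, and $\bot$ is handled vacuously since $K,V,v \VDash \bot$ never holds.

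For the inductive steps on $\wedge$ and $\vee$, the argument is entirely routine: if $K,V,v \VDash \phi \wedge \psi$, then $K,V,v \VDash \phi$ and $K,V,v \VDash \psi$, so by the induction hypothesis $K,V,w \VDash \phi$ and $K,V,w \VDash \psi$ for any $w \geq v$, and hence $K,V,w \VDash \phi \wedge \psi$; the disjunction case is analogous, preserving the disjunct that witnesses the forcing at $v$.

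The only step that merits attention is the case of implication, and it is where I would expect any subtlety to hide. Suppose $K,V,v \VDash \phi \to \psi$ and let $w \geq v$. To show $K,V,w \VDash \phi \to \psi$, fix an arbitrary $u \geq w$ with $K,V,u \VDash \phi$. By transitivity of $\leq$, we have $u \geq v$, so the assumption $K,V,v \VDash \phi \to \psi$ applied to $u$ directly yields $K,V,u \VDash \psi$. Note that this argument does not require the induction hypothesis at all, because the semantic clause for $\to$ already quantifies over all successors; transitivity of $\leq$ does all the work.

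The only real obstacle, if one can call it that, is making sure to invoke transitivity (which holds because $(K,\leq)$ is a partial order) in the implication clause, and to recognise that the induction hypothesis is genuinely needed only for $\wedge$ and $\vee$. No case invokes any property of trees, end-nodes, or finiteness, so the statement holds for arbitrary Kripke frames as stated.
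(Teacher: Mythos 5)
Your proof is correct and follows exactly the standard induction on formula complexity that the paper indicates (the paper itself only remarks that the result follows ``by induction on formulas'' and omits the details). Your observation that the implication case needs only transitivity of $\leq$ rather than the induction hypothesis is accurate and a nice touch, but the argument is otherwise the same one the paper has in mind.
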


We can now define the logic of a Kripke frame and of a class of Kripke frames.

\begin{definition}
    If $(K,\leq)$ is a Kripke frame for $\IPC$, we define the \emph{propositional logic $\Logic(K,\leq)$} to be the set of all propositional formulas that are valid in $K$. For a class $\mathcal{K}$ of Kripke frames, we define the \emph{propositional logic $\Logic(\mathcal{K})$} to be the set of all propositional formulas that are valid in all Kripke frames $(K,\leq)$ in $\mathcal{K}$. Given an intermediate logic $\mathbf{J}$, we say that \emph{$\mathcal{K}$ characterises $\mathbf{J}$} if $\Logic(\mathcal{K}) = \mathbf{J}$.
\end{definition}

We might write $\Logic(K)$ for $\Logic(K,\leq)$. Let us conclude this section with a few examples of logics and the classes of Kripke frames they are characterised by. For the proofs of these characterisations, we refer to the literature.

\begin{proposition}[{\cite[Theorem 6.12]{TroelstraVanDalen}}]
    \label{Proposition: IPC characterised by finite trees}
    Intuitionistic propositional logic $\IPC$ is characterised by the class of all finite trees.
\end{proposition}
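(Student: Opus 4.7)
The plan is to prove soundness and completeness separately, obtaining completeness with respect to finite trees via the finite model property followed by an unraveling argument.

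For soundness, I would proceed by induction on the length of an $\IPC$-proof. Fix a Hilbert-style axiomatisation of $\IPC$. For each axiom schema one checks that it is forced at every node of every Kripke frame (using the definition of $\VDash$ and persistence, which has just been established). For modus ponens, if $v \VDash \phi$ and $v \VDash \phi \rightarrow \psi$, then taking $w = v$ in clause (iv) gives $v \VDash \psi$. Since every finite tree is in particular a Kripke frame, this shows $\IPC \subseteq \Logic(\mathcal{T})$, where $\mathcal{T}$ is the class of finite trees.

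For the converse, I assume $\IPC \nvdash \phi$ and aim to build a finite tree refuting $\phi$. First I invoke the standard finite model property for $\IPC$: let $\Sigma$ be the (finite) set of subformulas of $\phi$, and consider the set of $\Sigma$-prime theories, i.e., finite subsets $\Gamma \subseteq \Sigma$ that are closed under $\IPC$-consequence relative to $\Sigma$, contain at least one disjunct of every disjunction they contain, and do not prove $\phi$. Ordered by inclusion, these form a finite poset $(P,\subseteq)$; defining $V(p) = \Set{\Gamma \in P}{p \in \Gamma}$ yields a Kripke model with the truth lemma $\Gamma \VDash \psi$ iff $\psi \in \Gamma$ for every $\psi \in \Sigma$. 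Choosing a prime theory not containing $\phi$ as root then refutes $\phi$.

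The second step is to replace this finite poset by a finite tree. I would form the unraveling $T$ whose nodes are the finite $\leq$-chains $v_0 \leq v_1 \leq \cdots \leq v_n$ in $P$ starting at the root, ordered by end-extension, and define $V'(p)$ to contain the chain ending at $v_n$ iff $v_n \in V(p)$. By a straightforward induction on propositional formulas one verifies that a chain ending at $v_n$ forces $\psi$ in $(T,V')$ iff $v_n$ forces $\psi$ in $(P,V)$; the atomic and conjunction/disjunction cases are immediate, and for implication one uses that successors of a chain in $T$ correspond exactly to successors of its endpoint in $P$. Hence $T$ is a finite tree refuting $\phi$, completing the proof.

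The main obstacle is the first part of the completeness argument, the finite canonical model construction: one must verify that every $\Sigma$-prime theory can be extended by an element of $\Sigma$ whenever that element is a nontrivial $\IPC$-consequence, and that disjunctions can be split (the intuitionistic prime-extension lemma). The unraveling step, by contrast, is purely combinatorial and essentially bookkeeping. Since both components are well documented in the literature, I would cite \cite{TroelstraVanDalen} rather than reproduce the argument in full.
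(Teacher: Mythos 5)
The paper does not actually prove this proposition; it is stated with a citation to Troelstra--van Dalen and the text explicitly defers to the literature, so your decision to cite rather than reproduce the argument matches the paper exactly, and your sketch (soundness by induction on derivations, completeness via the finite canonical model of $\Sigma$-prime theories, then unraveling the generated submodel into a finite tree) is the standard route taken in the cited source. The overall approach is correct.

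Two details as written would fail, though both are routinely repairable. First, you impose ``do not prove $\phi$'' as a defining condition on \emph{all} theories in the poset $P$. This condition is not preserved under extension and it excludes nodes that the truth lemma needs: for $\phi = \neg\neg p \rightarrow p$, every $\Sigma$-prime theory containing $p$ proves $\phi$, so your $P$ would contain no node forcing $p$, every node would then force $\neg p$, and $\phi$ would be forced vacuously everywhere --- the model fails to refute $\phi$. The correct condition on all nodes is consistency (not proving $\bot$); the requirement of not proving $\phi$ is reserved for the choice of root. Second, your unraveling takes chains $v_0 \leq v_1 \leq \cdots \leq v_n$ with non-strict inequalities; a finite poset has infinitely many such chains (repeat a node arbitrarily often), so the resulting tree is infinite. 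The chains must be strictly increasing, in which case a finite poset yields finitely many chains and the endpoint correspondence you describe still goes through: the set of endpoints of chains end-extending a given chain with endpoint $v$ is exactly $\Set{w \in P}{w \geq v}$, which is all the implication clause requires.
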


\begin{example}\label{Example: Logics characterised by finite trees}
    We present some examples of logics from \cite{deJongh2011} that are characterised by classes of finite trees.
    \begin{enumerate}
        \item \emph{Dummett's logic} $\LC$ is the logic obtained by extending $\IPC$ with the scheme $(p \rightarrow q) \vee (q \rightarrow v)$. The logic $\LC$ is characterised by the class of finite linear orders.
        \item The \emph{Gabbay-de Jongh logics} $\mathbf{T}_n$, for $n \in \mathbb{N}$, are characterised by the class of finite trees which have splittings of exactly $n$, i.e., every node is either an end-node or has exactly $n$ successors. $\mathbf{T}_1$ coincides with $\LC$, and the logics $\mathbf{T}_n$ are axiomatised by the following formulas:
 $$
     \bigwedge_{k \leq n+1} \left( \left( \phi_k \rightarrow \bigwedge_{j \neq k} \phi_j \right) \rightarrow \bigwedge_{j \neq k} \phi_j \right) \rightarrow \bigwedge_{k \leq n+1} \phi_k.
 $$
        \item The \emph{logics of bounded depth $n$} $\mathbf{BD}_n$, for $n \in \mathbb{N}$, are characterised by the finite trees of depth $n$. The logic of depth $1$, $\mathbf{BD}_1$ is classical logic $\CPC$ axiomatised by $\beta_1 = ((\phi_1 \rightarrow \psi) \rightarrow \phi_1) \rightarrow \phi_1$ (Pierce's law). For every $n$, the logics $\mathbf{BD}_n$ is axiomatised by $\beta_n$, which is obtained via a recursive definition, $\beta_{n + 1} = ((\phi_{n+1} \rightarrow \beta_n) \rightarrow \phi_{n+1}) \rightarrow \phi_{n+1}$.
    \end{enumerate}
\end{example}

\subsection{The de Jongh property}
\label{Section: Loyalty faithfulness and dJP}

In this section, we will introduce the de Jongh property and provide a framework for proving it. For a detailed history of de Jongh's theorem and the de Jongh property see \cite{deJongh2011}.

\begin{definition}
    Let $\phi$ be a propositional formula and let $\sigma: \Prop \to \mathcal{L}^\mathsf{sent}_\in$ an assignment of propositional variables to $\mathcal{L}_\in$-sentences. By $\phi^\sigma$ we denote the $\mathcal{L}_\in$-sentence obtained from $\phi$ by replacing each propositional variable $p$ with the sentence $\sigma(p)$.
\end{definition}

The de Jongh property is a generalisation of de Jongh's classical result \cite{deJongh1970} concerning Heyting arithmetic $\mathsf{HA}$ and intuitionistic propositional logic $\mathbf{IPC}$. 

\begin{theorem}[de Jongh, {\cite{deJongh1970}}]\label{Theorem:de Jongh}
    Let $\phi$ be a formula of propositional logic. Then $\mathsf{HA} \vdash \phi^\sigma$ for all $\sigma: \Prop \to \mathcal{L}_\mathsf{HA}^\mathsf{sent}$ if and only if $\IPC \vdash \phi$. 
\end{theorem}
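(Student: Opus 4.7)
The ``only if'' direction is soundness: if $\IPC \vdash \phi$, then $\IQC \vdash \phi^\sigma$ for any $\sigma$ (by substituting $\sigma(p)$ for each variable $p$ in an $\IPC$-derivation), and hence $\mathsf{HA} \vdash \phi^\sigma$ since $\mathsf{HA}$ is based on $\IQC$.

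For the ``if'' direction I would argue contrapositively. Assume $\IPC \nvdash \phi$. By \Cref{Proposition: IPC characterised by finite trees} there is a finite tree $(K,\leq)$ with root $r$ and propositional valuation $V : \Prop \to \P(K)$ such that $K, V, r \not\VDash \phi$. The plan is to realise this Kripke counterexample inside $\mathsf{HA}$: build a Kripke model $\mathcal{M} = (K, \leq, (M_v)_{v \in K})$ of $\mathsf{HA}$ on the same frame---placing a classical model of $\mathsf{PA}$ (e.g.\ $\mathbb{N}$, or a suitable nonstandard model) at each end-node and choosing the interior $M_v$ compatibly with the transition maps---and then pick an assignment $\sigma : \Prop \to \mathcal{L}_\mathsf{HA}^\mathsf{sent}$ with the property that for every $v \in K$ and $p \in \Prop$,
\[
    \mathcal{M}, v \VDash \sigma(p) \iff v \in V(p).
\]
A routine induction on propositional formulas lifts this equivalence to $\phi$, giving $\mathcal{M}, r \not\VDash \phi^\sigma$, and so, by soundness, $\mathsf{HA} \nvdash \phi^\sigma$.

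The technical heart of the argument, and in my view the main obstacle, is constructing the sentences $\sigma(p)$ so that they force on exactly the prescribed upsets. Since $K$ is finite, it suffices, by forming finite disjunctions, to realise each principal upset $K^{\geq w}$ as the forcing locus of a single arithmetic sentence $\tau_w$---i.e.\ $\mathcal{M}, v \VDash \tau_w$ iff $v \geq w$. This is Smory\'nski's step: applying the arithmetical fixed-point theorem one constructs self-referential sentences (in the spirit of Rosser or G\"odel sentences) whose classical (in-)provability in each $M_v$ tracks precisely the tree-theoretic relation between $v$ and $w$. Two delicate points must be managed: (i) the models $M_v$ at interior nodes must be chosen carefully so that $\mathcal{M}$ still validates the \emph{full} induction schema of $\mathsf{HA}$; and (ii) persistence must be respected, so that once $\tau_w$ is forced at some $v$ it is forced at every $v' \geq v$---forcing the fixed-point construction to encode the full ordering of the tree, not merely its set of end-nodes.
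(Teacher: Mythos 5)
The paper does not prove this statement at all: it is quoted as background and attributed to \cite{deJongh1970}, so there is no internal proof to compare against. What you have written is an outline of the standard Smory\'nski-style Kripke-model proof, and it is worth noting that this is precisely the template the paper follows for its own main result: your ``Kripke model of $\mathsf{HA}$ over the countermodel frame'' corresponds to the blended model of \Cref{Theorem: Faithful model based on finite tree}, your condition $\mathcal{M}, v \VDash \sigma(p) \iff v \in V(p)$ is the paper's notion of faithfulness, and your final induction is verbatim the induction in the proof of \Cref{Theorem: IZF de Jongh for finite trees}. So the strategy is the right one.

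That said, as a proof of de Jongh's theorem the proposal is an outline whose entire mathematical content sits in the step you defer. Two concrete points. First, reducing the problem to realising the principal upsets $K^{\geq w}$ is correct, but realising them cannot be done by sentences that merely track \emph{which end-nodes lie above} the current node: in a chain every node has the same end-node above it, so the sentences must also detect the depth of the node. In the paper's set-theoretic setting this is exactly the role of the counting sentences $\psi_n$ of \Cref{Proposition: counting upsets} (``among any $n$ subsets of $1$ two are equal''), combined with end-node separation in \Cref{Theorem: blended model with separated end-nodes is faithful}; these sentences have no direct arithmetical analogue, and Smory\'nski instead uses $\Sigma_1$ fixed-point sentences, exploiting that forcing of a $\Sigma_1$ sentence at a node coincides with its classical truth in the attached model, so that one can switch such a sentence on at a prescribed node of each branch. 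Second, your point (i) is not something to be ``managed carefully'' ad hoc: that a finite tree of classical models of $\mathsf{PA}$ with substructure transition maps forces full $\mathsf{HA}$ is itself a nontrivial theorem of Smory\'nski and must be invoked or proved. With those two ingredients supplied the argument is complete; without them it is a plan rather than a proof.
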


Given a theory based on intuitionistic logic, we may consider its propositional logic, i.e., the set of propositional formulas that are derivable after substituting the propositional letters by arbitrary sentences in the language of the theory.

\begin{definition}
    Let $\mathsf{T}$ be a theory in intuitionistic predicate logic, formulated in a language $\mathcal{L}$. A propositional formula $\phi$ will be called \emph{$\mathsf{T}$-valid} if and only if $\mathsf{T} \vdash \phi^\sigma$ for all $\sigma: \Prop \to \mathcal{L}^\mathsf{sent}$. The \emph{propositional logic $\Logic(\mathsf{T})$} is the set of all $\mathsf{T}$-valid formulas.
\end{definition}

Given a theory $\mathsf{T}$ and an intermediate logic $\mathbf{J}$, we denote by $\mathsf{T}(\mathbf{J})$ the theory obtained by closing $\mathsf{T}$ under $\mathbf{J}$.

\begin{definition}\label{Definition:de Jongh property}
    We say that a theory $\mathsf{T}$ has the \emph{de Jongh property} if $\Logic(\mathsf{T}) = \IPC$. The theory $\mathsf{T}$ has the \emph{de Jongh property with respect to an intermediate logic $\mathbf{J}$} if $\Logic(\mathsf{T}(\mathbf{J})) = \mathbf{J}$.
\end{definition}

De Jongh's theorem is equivalent to the assertion that Heyting arithmetic has the de Jongh property. 

\section{Blended models}
\label{Section: blended models}

In this section, we will construct the blended models and show that they are models of intuitionistic Zermelo-Fraenkel set theory $\IZF$.  

\subsection{Constructing blended models}
We will now construct a blended model based on a given Kripke frame $(K,\leq)$ with end-nodes. To every end-node $e \in K$, we associate a transitive transitive model $M_e$ of set theory such that $\Ord^{M_{e_0}} = \Ord^{M_{e_1}}$ for all $e_0, e_1 \in E_K$. For the remainder of this section, fix such a collection $\Seq{M_e}{e \in E_K}$. Note that $\Ord^{M_e}$ denotes the same ordinal in the meta-universe for all $e \in E_K$; we can therefore refer to this ordinal by $\Ord^{M_e}$ without specifying a particular $e \in E_K$.

The construction of the blended models happens in three steps. We begin by constructing the collection of domains $\Seq{\mathcal{D}_v}{v \in K}$: First the domains for the end-nodes and, secondly, for all remaining nodes of the Kripke frame. The third step is to define the semantics of the $\in$-relation.

\begin{description}
\item[Step 1. Domains for end-nodes] Given an end-node $e \in E_K$ of $(K,\leq)$, and the associated model $M_e$, we define a function $f_e: M_e \to V$ as follows by $\in$-recursion:
\begin{equation*}
    f_e(x) = (e, f_e[x]).
\end{equation*}
Let $\mathcal{D}_e = f_e[M_e]$. Hence, each $\mathcal{D}_e$ is a set of functions $x: K^{\geq e} \to \ran(x)$ (where $K^{\geq e} = \set{e}$). Moreover, for $\alpha \in \Ord^M$, let $\mathcal{D}_e^\alpha = f_e[(V_\alpha)^{M_e}]$. Then $\mathcal{D}_e^0 = \emptyset$ and it holds that $$\bigcup_{\alpha \in \Ord^M} \mathcal{D}_e^\alpha = \mathcal{D}_e.$$

In \Cref{Proposition: end-nodes isomorphic to original models} below, we will see that the domains of the end-nodes of a blended model are isomorphic (with respect to the equality and membership relations) to the classical model of set theory associated to the node.

\item[Step 2. Domains for all nodes]
Now we are ready to define the domains at the remaining nodes. We do this simultaneously for all $v \in K \setminus E_K$ by induction on $\alpha \in \Ord^{M_e}$. Let $\mathcal{D}_v^\alpha$ consist of the functions $x: K^{\geq v} \to \ran(x)$ such that the following properties hold:
\begin{enumerate}
    \item for all end-nodes $e \geq v$, we have $x \upharpoonright \set{e} \in \mathcal{D}_e^\alpha$,
    \item for all non-end-nodes $w \geq v$, we have $x(w) \subseteq \bigcup_{\beta < \alpha} \mathcal{D}_w^\beta$, and
    \item for all nodes $u \geq w \geq v$ we have that $\Set{y \upharpoonright K^{\geq u}}{y \in x(w)} \subseteq x(u)$.
\end{enumerate}
We use the restriction maps $f_{wu}$ with $x \mapsto x \upharpoonright K^{\geq u}$ as transition functions for the domains. Note that this map is well-defined by the definition of the domains. In particular, $f_{wu} \circ f_{vw} = f_{vu}$. 

By the definition of the maps $f_{vw}$, it is clear that condition (i) is just a special case of condition (iii). We state it separately as it requires special attention when working with blended models.

Finally, we define the domain $\mathcal{D}_v$ at the node $v$ to be the set $$\mathcal{D}_v = \bigcup_{\alpha \in \Ord^M} \mathcal{D}_v^\alpha.$$ 

\item[Step 3. Defining the semantics] 
We define, by induction on $\mathcal{L}_\in$-formulas, the forcing relation at every node of the Kripke model in the following way, where $\phi$ and $\psi$ are formulas with all free variables shown, and, moreover, $\bar y = y_0,\dots,y_{n-1}$ are elements of $\mathcal{D}_v$ for the node $v$ considered on the left side: 
    \begin{enumerate}
        \item $(K,\leq,\mathcal{D}),v \VDash x \in y$ if and only if $x \in y(v)$,
        \item $(K,\leq,\mathcal{D}),v \VDash a = b$ if and only if $a = b$,
        \item $(K,\leq,\mathcal{D}),v \VDash \exists x \, \phi(x,\bar y)$ if and only if there is some $a \in D_v$ \\ with $(K,\leq,\mathcal{D}),v \VDash \phi(a, \bar y)$,
        \item $(K,\leq,\mathcal{D}),v \VDash \forall x \, \phi(x,\bar y)$ if and only if for all $w \geq v$ and $a \in D_w$ \\ we have $(K,\leq,\mathcal{D}),w \VDash \phi(a,\bar y)$.
    \end{enumerate}
    The cases for $\rightarrow$, $\wedge$, $\vee$ and $\bot$ are analogous to the ones in the above definition of the forcing relation for Kripke models for $\IPC$. 
\end{description}

This finishes the definition of the blended models. 

\begin{definition}
    We call $(K,\leq,\mathcal{D})$ the \emph{blended Kripke model obtained from $\Seq{M_e}{e \in E_K}$}. 
\end{definition}

If the collection $\Seq{M_e}{e \in E_K}$ is either clear from the context, or if it does not matter, we will also say that $(K,\leq,\mathcal{D})$ a \emph{blended Kripke model}. We will usually say \emph{blended model} instead of blended Kripke model.

An $\mathcal{L}_\in$-formula $\phi$ is \emph{valid in $(K,\leq,\mathcal{D})$} if $v \VDash \phi$ holds for all $v \in K$, and $\phi$ is \emph{valid} if it is valid in every Kripke frame $K$. We will call $(K,\leq)$ the \emph{underlying Kripke frame} of $(K,\leq,\mathcal{D})$, or the frame that $(K,\leq,\mathcal{D})$ is based on. Moreover, let us call $\heyting{\phi}^{(K,\leq,D,e)} = \Set{v \in K}{v \VDash \phi}$ the \emph{truth set} of a sentence $\phi$ in the language of set theory in a blended model $(K,\leq,D,e)$. When the model is clear from the context, we will also write $\heyting{\phi}^K$ or just $\heyting{\phi}$.

Before we continue with some basic properties of the blended models, let us briefly discuss this construction in comparison to Lubarsky's Kripke models \cite{Lubarsky2005, Lubarsky2018, LubarskyDiener2014, LubarskyRathjen2008}, which are constructed in a similar way. The crucial difference, however, is that our models are constructed in a top-down manner that allows to choose classical models of set theory at the end-nodes, whereas Lubarsky's bottom-up construction requires elementary equivalence.

\subsection{Basic properties}

We will now observe some basic properties of the blended models.

\begin{proposition}[Persistence]
    Let $(K,\leq,\mathcal{D})$ be a blended model and $\phi$ a formula in the language of set theory. If $v \VDash \phi(a_0,\dots,a_{n-1})$ and $w \geq v$, then $w \VDash \phi(f_{vw}(a_0),\dots,f_{vw}(a_{n-1}))$. 
\end{proposition}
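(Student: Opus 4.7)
The plan is a standard induction on the complexity of the $\mathcal{L}_\in$-formula $\phi$, using the compositionality $f_{wu} \circ f_{vw} = f_{vu}$ of the restriction maps (noted just after the domain construction) and the closure condition (iii) in Step 2. Throughout, I write $\bar a = a_0, \dots, a_{n-1}$ and $f_{vw}(\bar a)$ for componentwise application.

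For the atomic cases, suppose $v \VDash a \in b$, i.e., $a \in b(v)$. Since $w \geq v$, condition (iii) applied with $u = w$ gives $\Set{y \upharpoonright K^{\geq w}}{y \in b(v)} \subseteq b(w)$, so $f_{vw}(a) = a \upharpoonright K^{\geq w} \in b(w) = f_{vw}(b)(w)$, which is exactly $w \VDash f_{vw}(a) \in f_{vw}(b)$. For equality, if $a = b$ in $\mathcal{D}_v$ then $f_{vw}(a) = f_{vw}(b)$, so persistence at $w$ is immediate.

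The propositional connective cases are routine: conjunction and disjunction follow directly from the induction hypothesis applied to the subformulas, while implication uses the universal quantification over successor nodes in its forcing clause (if $u \geq w$ then $u \geq v$, so the hypothesis on $v$ applies). The existential clause is handled by noting that if $b \in \mathcal{D}_v$ witnesses $v \VDash \exists x\, \phi(x, \bar a)$, then $f_{vw}(b) \in \mathcal{D}_w$ and, by induction, $w \VDash \phi(f_{vw}(b), f_{vw}(\bar a))$, giving $w \VDash \exists x\, \phi(x, f_{vw}(\bar a))$. For the universal clause, persistence is essentially built into the semantics: the clause already quantifies over all $u \geq v$, so restricting to $u \geq w$ loses nothing, and the identity $f_{wu} \circ f_{vw} = f_{vu}$ guarantees that the parameters are transported consistently.

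The only step requiring genuine care is the atomic membership case, which is where condition (iii) of the domain construction earns its keep; everything else is bookkeeping with the transition maps $f_{vw}$. I do not expect any serious obstacle, since the domains and semantics have been set up precisely so that this induction goes through cleanly.
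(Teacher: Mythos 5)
Your proof is correct and follows exactly the route the paper intends: the paper's own proof is just the one-line remark that the statement follows by induction on $\mathcal{L}_\in$-formulas, and your write-up supplies the details of that same induction, correctly identifying condition (iii) of the domain construction as the key to the atomic membership case and the identity $f_{wu} \circ f_{vw} = f_{vu}$ as what makes the quantifier cases go through.
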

\begin{proof}
    This is proved by induction on $\mathcal{L}_\in$-formulas
\end{proof}

\begin{proposition}
     The blended models are sound with respect to $\IQC$.
\end{proposition}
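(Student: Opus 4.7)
The plan is to proceed by induction on proofs in a fixed Hilbert-style calculus for $\IQC$ --- say the one with the standard propositional axioms, the quantifier axioms $\forall x\, \phi(x) \to \phi(t)$ and $\phi(t) \to \exists x\, \phi(x)$, the reflexivity and substitutivity axioms for $=$, together with modus ponens and the two generalisation rules --- fix an arbitrary blended model $(K,\leq,\mathcal{D})$, and show that every axiom is valid in the model and every rule preserves validity. The argument parallels the standard soundness proof for Kripke semantics of intuitionistic predicate logic with increasing domains, adapted to the present setting where the domain growth is witnessed by the transition functions $f_{vw}$.

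I would first dispatch the propositional axioms and modus ponens, which reduce to the familiar $\IPC$-Kripke argument once Persistence is available. Since the preceding proposition supplies exactly this, the step is essentially immediate: an implication forced at a node remains forced at every future node, where the antecedent can then be discharged. For the quantifier axioms, $\forall$-elimination follows by instantiating the semantic clause for $\forall$ with $w = v$, and $\exists$-introduction by taking the witness $a \in \mathcal{D}_v$ on the left. The two generalisation rules unwind by Persistence together with the compositional identity $f_{wu} \circ f_{vw} = f_{vu}$, which is needed to track how the free parameters are transported upward through the frame. The equality axioms are trivial, because $v \VDash a = b$ unfolds to literal identity of the functions $a,b$ in the meta-universe, so substitution of equals into $\in$-atoms and into compound formulas reduces to ordinary substitution in the meta-theory.

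The main obstacle I expect is not conceptual but notational: one must keep careful track of how free-variable assignments travel along the transition maps $f_{vw}$, especially in the generalisation rules where the ``$x$ not free in $\phi$'' side-condition interacts with the increasing-domain clause for $\forall$. Once this bookkeeping is set up cleanly, each case reduces to a short calculation, and I expect the whole proof to fit into a few lines in the same ``by induction on $\mathcal{L}_\in$-formulas'' style as the preceding persistence proposition.
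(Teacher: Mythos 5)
Your proposal is correct, but it takes a more laborious route than the paper, which disposes of the proposition in one line by observing that blended models are an instance of the general notion of a Kripke model for predicate logic (a system of domains over a poset with transition maps, with the standard forcing clauses) and then citing the general soundness theorem for such models with respect to $\IQC$ (Troelstra--van Dalen, Theorem 6.6). What you propose is essentially to unfold that citation: the induction on derivations in a Hilbert-style calculus, with the case analysis over propositional axioms, quantifier axioms, equality axioms, and the rules, is exactly how the general theorem is proved, so your argument is sound and self-contained where the paper's is not. The trade-off is length versus transparency: your version makes explicit the two points that genuinely need checking in this specific setting, namely that parameters must be transported along the transition maps $f_{vw}$ (the paper's forcing clause for $\forall$ is silent about applying $f_{vw}$ to the $\bar y$, and your bookkeeping remark addresses precisely this), and that the equality axioms hold because $v \VDash a = b$ is literal identity of functions, which makes substitutivity trivial. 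If you write it out, the only thing to add is a sentence confirming that persistence (the preceding proposition) covers the atomic case needed for the implication and universal clauses; otherwise each case is the short calculation you describe.
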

\begin{proof}
    This follows from the more general soundness result for Kripke models for predicate logics with respect to $\IQC$. See, for example, \cite[Theorem 6.6]{TroelstraVanDalen}.
\end{proof}

We will now make the essential observation that the domains at the end-nodes are isomorphic to the models they were obtained from.

\begin{proposition}\label{Proposition: end-nodes isomorphic to original models}
    Let $(K,\leq,\mathcal{D})$ be a blended model, and $e \in E_K$. Then $(K,\leq,\mathcal{D}), e \VDash \phi(f_e(a_0),\dots,f_e(a_{n-1}))$ if and only if $M_e \vDash \phi(a_0,\dots,a_{n-1})$ for all elements $a_0,\dots,a_{n-1} \in M_e$.
\end{proposition}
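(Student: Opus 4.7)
The plan is to prove both directions simultaneously by induction on the complexity of the formula $\phi$, after first establishing that $f_e$ is, in the language of $\in$ and $=$, an isomorphism between $M_e$ and the end-node domain $\mathcal{D}_e$. The feature that makes everything work is that $e$ is maximal in $K$, so $K^{\geq e} = \{e\}$; hence the implication and universal-quantifier clauses in the forcing definition, which quantify over $w \geq e$, collapse to classical quantification at $e$ alone, and the forcing semantics at $e$ effectively becomes a two-valued semantics on $\mathcal{D}_e$.

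First I would verify that $f_e : M_e \to \mathcal{D}_e$ is a bijection. Surjectivity is immediate from the definition $\mathcal{D}_e = f_e[M_e]$. Injectivity follows by $\in$-induction inside the (well-founded, extensional) model $M_e$: if $f_e(a) = f_e(b)$, then in particular $f_e[a] = f_e[b]$, so by the inductive hypothesis the $\in^{M_e}$-extensions of $a$ and $b$ coincide, whence $a = b$ in $M_e$ by Extensionality.

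With injectivity in hand, the atomic cases are essentially book-keeping. For membership: by the definition of the forcing relation, $e \VDash f_e(a) \in f_e(b)$ iff $f_e(a) \in f_e(b)(e) = f_e[b]$, and by injectivity of $f_e$ this happens iff $a \in b$ in $M_e$. For equality: $e \VDash f_e(a) = f_e(b)$ iff $f_e(a) = f_e(b)$, which again by injectivity is iff $a = b$ in $M_e$. Conjunction and disjunction are then completely routine applications of the inductive hypothesis. For implication, the forcing clause says $e \VDash \phi \rightarrow \psi$ iff for all $w \geq e$, $w \VDash \phi$ implies $w \VDash \psi$; since $e$ is an end-node, only $w = e$ contributes, so the inductive hypothesis gives precisely $M_e \vDash \phi \rightarrow \psi$.

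For the quantifier clauses, the same maximality of $e$ is what carries the argument. For the existential case, $e \VDash \exists x\,\phi(x,\overline{f_e(a)})$ iff there is $c \in \mathcal{D}_e$ with $e \VDash \phi(c,\overline{f_e(a)})$, and since $f_e$ is a bijection we may write $c = f_e(a')$ for a unique $a' \in M_e$; the inductive hypothesis then translates this to $M_e \vDash \exists x\, \phi(x,\overline{a})$. For the universal case, the clause $w \geq e$ again forces $w = e$, reducing the statement to a quantification over $\mathcal{D}_e$, which via $f_e$ matches quantification over $M_e$. The only potential subtlety is keeping the variable assignments and their $f_e$-preimages straight throughout the induction, but there is no genuine obstacle: all the conceptual work has been done by noting that $e$ is an end-node and that $f_e$ is a bijection respecting $\in$.
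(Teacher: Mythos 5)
Your proposal is correct and follows essentially the same route as the paper: establish that $f_e$ is a bijection respecting $\in$, handle the atomic cases, and observe that since $K^{\geq e}=\{e\}$ the intuitionistic clauses at an end-node collapse to the classical ones. The only cosmetic difference is that the paper exhibits an explicit inverse $g$ of $f_e$ by $\in$-recursion, whereas you prove injectivity by $\in$-induction using extensionality; both are fine.
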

\begin{proof}
    Let us first argue that $f_e: M_e \to \mathcal{D}_e$ is a bijection. Define $g$ by $\in$-recursion with $(e, x) \mapsto g[x]$. It follows by induction that $g \circ f_e = \id_{M_e}$ and $f_e \circ g = \id_{\mathcal{D}_e}$. Hence, $f_e$ is a bijection.
    
    It suffices to prove the claim for the atomic cases: equality and set-membership. The case for equality follows from the definition of the semantics and the fact that $f$ is bijective. For set-membership observe that if $M_e \vDash x \in y$, then $f_e(x) \in f_e(y)(e)$ and hence $e \VDash f_e(x) \in f_e(y)$. Conversely, if $e \VDash f_e(x) \in f_e(y)$, then $f_e(x) \in f_e(y)(e)$ and hence $x = g(f_e(x)) \in g(f_e(y)) = y$. The other cases follow trivially as the intuitionistic interpretation of the logical symbols in an end-node coincides with the classical interpretation in the model $M_e$.
\end{proof}

\subsection{$\IZF$ holds in blended models}

In this section, we will show that the axioms of $\IZF$ (see \Cref{Figure: IZF}) hold in blended models. For the sake of this section, let $(K,\leq,\mathcal{D})$ be a blended model obtained from $\Seq{M_e}{e \in E_K}$.

Note that $\IZF$ trivially holds true at every end-node because the models associated with the end-nodes are, in fact, models of $\ZF$ set theory.

\begin{figure}
    \begin{description}
        \item[Extensionality] $\forall a \forall b (\forall x(x \in a \leftrightarrow x \in b) \rightarrow a = b)$
        \item[Empty set] $\exists a \ \forall x \in a \ \bot$
        \item[Pairing] $\forall a \forall b \exists y \forall x (x \in y \leftrightarrow (x = a \vee x = b))$
        \item[Union] $\forall a \exists y \forall x (x \in y \leftrightarrow \exists u (u \in a \wedge x \in u))$
        \item[Power set] $\forall a \exists y \forall x (x \in y \leftrightarrow  x \subseteq a)$
        \item[Infinity]
        $ \exists a (\exists x \ x \in a \wedge \forall x \in a \exists y \in a \ x \in y) $
        \item[Set Induction] $(\forall a (\forall x \in a \ \phi(x) \rightarrow \phi(a))) \rightarrow \forall a \phi(a),$ for all formulas $\phi(x)$.
        \item[Separation]
        $ \forall a \exists y \forall x (x \in y \leftrightarrow (x \in a \wedge \phi(x))),$
        for all formulas $\phi(x)$.
        \item[Collection]
        $\forall a (\forall x \in a \exists y \ \phi(x,y) \rightarrow \exists b \forall x \in a \exists y \in b \ \phi(x,y)),$
        for all formulas $\phi(x,y)$, where $b$ is not free in $\phi(x,y)$.
    \end{description}
    
    \caption{The axioms of $\IZF$.}
    \label{Figure: IZF}
\end{figure}

\begin{proposition}
    The model $(K,\leq,\mathcal{D})$ satisfies the axiom of extensionality.
\end{proposition}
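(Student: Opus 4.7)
The plan is to fix a node $v \in K$ and elements $a, b \in \mathcal{D}_v$ with $v \VDash \forall x (x \in a \leftrightarrow x \in b)$, and to deduce $v \VDash a = b$. By the atomic clause for equality in the blended model, the conclusion amounts to the literal set-theoretic identity $a = b$; since $a$ and $b$ are both functions with domain $K^{\geq v}$, this reduces to showing that $a(w) = b(w)$ for every $w \geq v$.

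First I would unpack the hypothesis using the clauses for $\forall$ and $\leftrightarrow$: it says that for every $w \geq v$ and every $x \in \mathcal{D}_w$, $w \VDash x \in f_{vw}(a) \leftrightarrow x \in f_{vw}(b)$. Applying the atomic clause for membership and the fact that $f_{vw}(a)(w) = (a \upharpoonright K^{\geq w})(w) = a(w)$ (and similarly for $b$), this reduces to: for every $w \geq v$ and every $x \in \mathcal{D}_w$, $x \in a(w) \iff x \in b(w)$.

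The next step is the observation that the values $a(w)$ and $b(w)$ are already subsets of $\mathcal{D}_w$. For non-end-nodes this is exactly clause (ii) of the construction; for an end-node $e$ it is built into the definition $\mathcal{D}_e = f_e[M_e]$, since any element of $\mathcal{D}_e$ is of the form $(e, f_e[x])$ with $f_e[x] \subseteq \mathcal{D}_e$. Combined with the pointwise equivalence just extracted, this forces $a(w) = b(w)$ for every $w \geq v$, and hence $a = b$ as functions, which is what is needed.

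There is no real obstacle here: extensionality is essentially wired into the construction because equality is interpreted as genuine identity of functions and membership is read off pointwise via evaluation at the node. The only mildly delicate point is to treat end-nodes and non-end-nodes uniformly, which is handled by noting that clauses (i) and (ii) together ensure the containment $a(w), b(w) \subseteq \mathcal{D}_w$ that is needed to pass from pointwise equivalence to equality.
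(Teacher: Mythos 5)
Your proof is correct and follows essentially the same route as the paper: unpack the forcing of $\forall x(x \in a \leftrightarrow x \in b)$ into the pointwise statement $a(w)=b(w)$ for all $w \geq v$, and conclude $a=b$ as functions since equality is interpreted as literal identity. You merely make explicit a step the paper compresses into an ``i.e.'', namely that $a(w), b(w) \subseteq \mathcal{D}_w$ so that the quantifier over $\mathcal{D}_w$ suffices to force set equality of the values.
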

\begin{proof}
    Let $v \in K$ and $a, b \in \mathcal{D}_v$. We have to show that $$v \VDash \forall x(x \in a \leftrightarrow x \in b) \rightarrow a = b.$$
    So assume that $w \VDash \forall x(x \in a \leftrightarrow x \in b)$ for all $w \geq v$, i.e., $a(w) = b(w)$ for all $w \geq v$. Hence, $a$ and $b$ are equal as functions with domain $K^{\geq v}$, and so they are equal.
\end{proof}

\begin{proposition}\label{Prop: axiom of pairing}
    The model $(K,\leq,\mathcal{D})$ satisfies the axiom of pairing.
\end{proposition}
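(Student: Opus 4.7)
The plan is to construct the pair directly. Given $v \in K$ and $a, b \in \mathcal{D}_v$, define $y : K^{\geq v} \to V$ by $y(w) = \{f_{vw}(a), f_{vw}(b)\}$ for each $w \geq v$. The two tasks are then to verify that $y$ is a legitimate element of $\mathcal{D}_v$, and to establish the forcing equivalence $v \VDash \forall x(x \in y \leftrightarrow (x = a \vee x = b))$.

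For the first task, pick ordinals $\alpha_a, \alpha_b < \Ord^M$ with $a \in \mathcal{D}_v^{\alpha_a}$ and $b \in \mathcal{D}_v^{\alpha_b}$, and set $\alpha = \max(\alpha_a, \alpha_b) + 1$; I would check the three conditions of Step 2 for $y \in \mathcal{D}_v^\alpha$. Conditions (ii) and (iii) are essentially formal: (ii) is immediate because $f_{vw}(a)$ and $f_{vw}(b)$ already lie in $\mathcal{D}_w^{\alpha_a}$ and $\mathcal{D}_w^{\alpha_b}$ respectively, and (iii) follows from the composition law $f_{wu} \circ f_{vw} = f_{vu}$ noted after the construction, which gives $\{z \upharpoonright K^{\geq u} : z \in y(w)\} = \{f_{vu}(a), f_{vu}(b)\} = y(u)$. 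Condition (i), which demands that $y \upharpoonright \{e\} \in \mathcal{D}_e^\alpha$ at every end-node $e \geq v$, is the main obstacle and the only place where I need to use that each $M_e$ is a model of $\ZF$. Writing $g_e$ for the inverse of the bijection $f_e$ from the proof of \Cref{Proposition: end-nodes isomorphic to original models}, the set $c_e = \{g_e(f_{ve}(a)), g_e(f_{ve}(b))\}$ exists in $M_e$ by pairing, has $M_e$-rank below $\alpha$ by the choice of $\alpha$, and a direct unfolding yields $f_e(c_e) = (e, \{f_{ve}(a), f_{ve}(b)\}) = y \upharpoonright \{e\}$; hence $y \upharpoonright \{e\} \in f_e[(V_\alpha)^{M_e}] = \mathcal{D}_e^\alpha$.

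For the forcing equivalence, fix any $w \geq v$ and $x \in \mathcal{D}_w$. Unfolding the atomic clauses, $w \VDash x \in y$ is equivalent to $x \in y(w) = \{f_{vw}(a), f_{vw}(b)\}$, equivalently $x = f_{vw}(a)$ or $x = f_{vw}(b)$, and thus to $w \VDash x = a \vee x = b$, recalling that the parameters $a, b$ are interpreted at $w$ via the transition map $f_{vw}$. Both implications hold at every $w \geq v$, which gives the pairing axiom at $v$.
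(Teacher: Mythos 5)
Your proof is correct and follows essentially the same route as the paper: the witness $c(w)=\set{f_{vw}(a),f_{vw}(b)}$ is exactly the paper's, with condition (i) checked via pairing in $M_e$ and conditions (ii), (iii) read off from the definition. You simply supply more detail (the explicit rank bound $\alpha$ and the verification of the forcing equivalence) than the paper records.
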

\begin{proof}
    Let $v \in K$ and $a, b \in \mathcal{D}_v$. Let $c$ be the function with $c(w) = \set{f_{vw}(a), f_{vw}(b)}$ for all $w \geq v$. 
    
    Let us first show that $c \in \mathcal{D}_v$. For condition (i), let $e \geq v$ be an end-node. As $a, b \in \mathcal{D}_v$ it follows from the definition that $f_{ve}(a), f_{ve}(b) \in \mathcal{D}_e$. Hence, by pairing in $M_e$, we have that $c \upharpoonright \set{e} \in \mathcal{D}_e$, where $c(e) = \set{f_{ve}(a), f_{ve}(b)}$. Conditions (ii) and (iii) of the definition of $\mathcal{D}_v$ follow directly from the definition of $c$.
    
    Now it is straightforward to check that $c$ constitutes a witness for the axiom of pairing for $a$ and $b$ at the node $v$. 
\end{proof}

\begin{proposition}
    The model $(K,\leq,\mathcal{D})$ satisfies the axiom of union.
\end{proposition}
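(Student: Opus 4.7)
The plan is to construct the union of $a \in \mathcal{D}_v$ pointwise across the upper cone $K^{\geq v}$ and then verify that the resulting function lies in $\mathcal{D}_v$ and witnesses the axiom at $v$. Concretely, given $v \in K$ and $a \in \mathcal{D}_v$, I would define $c : K^{\geq v} \to V$ by
\[
    c(w) = \bigcup \Set{u(w)}{u \in a(w)}.
\]
This mimics the classical definition of union and, importantly, uses only data already contained in $a$ together with its transition behaviour.

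The main verification is that $c \in \mathcal{D}_v$. Since $a \in \mathcal{D}_v^\alpha$ for some $\alpha$, for every non-end-node $w \geq v$ each $u \in a(w)$ lies in $\mathcal{D}_w^\beta$ for some $\beta < \alpha$, hence $u(w) \subseteq \bigcup_{\gamma < \beta}\mathcal{D}_w^\gamma$; unioning these gives $c(w) \subseteq \bigcup_{\beta < \alpha}\mathcal{D}_w^\beta$, which is condition (ii). For condition (i) at an end-node $e \geq v$, the function $a \upharpoonright \set{e}$ equals $f_e(\tilde a)$ for a unique $\tilde a \in M_e$, so $a(e) = f_e[\tilde a]$ and $c(e) = f_e[\bigcup \tilde a]$ by unwinding $f_e$; union in $M_e$ then gives $c \upharpoonright \set{e} = f_e(\bigcup \tilde a) \in \mathcal{D}_e$. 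The step I expect to be the most delicate is condition (iii): given $s \geq w \geq v$ and $y \in c(w)$, there is some $u \in a(w)$ with $y \in u(w)$; applying condition (iii) for $a$ gives $u \upharpoonright K^{\geq s} \in a(s)$, and applying condition (iii) for $u$ gives $y \upharpoonright K^{\geq s} \in u(s) = (u \upharpoonright K^{\geq s})(s)$, so $y \upharpoonright K^{\geq s} \in c(s)$ as required. One must be careful here that the witnesses in $a(s)$ really are the restrictions of the witnesses in $a(w)$, which is precisely what condition (iii) for $a$ guarantees.

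Once $c \in \mathcal{D}_v$ is established, verifying that $c$ witnesses union is routine: for every $w \geq v$ and every $x \in \mathcal{D}_w$, by definition of the semantics we have $w \VDash x \in c$ iff $x \in c(w)$ iff there exists $u \in a(w)$ with $x \in u(w)$ iff there exists $u \in \mathcal{D}_w$ with $w \VDash u \in a$ and $w \VDash x \in u$, i.e., iff $w \VDash \exists u (u \in a \wedge x \in u)$. Since this equivalence holds at every $w \geq v$, the biconditional under the universal quantifier is forced at $v$, and hence $v \VDash \forall x (x \in c \leftrightarrow \exists u (u \in a \wedge x \in u))$, establishing the axiom of union.
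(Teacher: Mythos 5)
Your proposal is correct and follows essentially the same route as the paper: you define the same pointwise union $c(w)=\bigcup_{u\in a(w)}u(w)$, check membership in $\mathcal{D}_v$ (with condition (i) handled via union and transitivity in $M_e$, exactly as in the paper, and conditions (ii) and (iii) spelled out in more detail than the paper bothers to), and then verify the witnessing property. No gaps.
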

\begin{proof}
    Let $v \in K$ and $a \in \mathcal{D}_v$. Define a function $b$ with domain $K^{\geq v}$ with $b(w) = \bigcup_{c \in a(w)} c(w)$ for all $w \geq v$. 
    
    Again, we need to show that $b \in \mathcal{D}_v$. For condition (i), observe that $f_{ve}(a) \in \mathcal{D}_e$ for every end-node $e \geq v$. As the axiom of union holds in $M_e$, it follows that there is a witness $b' \in \mathcal{D}_e$. By transitivity of $M_e$, it must then hold that $b \upharpoonright \set{e} = b' \in \mathcal{D}_e$. As in the previous proposition, conditions (ii) and (iii) follow directly from the definition of $b$. Then $b$ witnesses the axiom of union for $a$.
\end{proof}

\begin{proposition}\label{Prop: axiom of empty set}
    The model $(K,\leq,\mathcal{D})$ satisfies the axiom of empty set.
\end{proposition}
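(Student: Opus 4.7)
The plan is to imitate the proofs of \Cref{Prop: axiom of pairing} and the union axiom: construct an explicit witness at each node $v \in K$, verify that it lies in the domain $\mathcal{D}_v$ by checking the three clauses in the definition, and then observe that it forces the required sentence. Concretely, given $v \in K$, I would define the function $c \colon K^{\geq v} \to V$ by $c(w) = \emptyset$ for every $w \geq v$, and take $c$ as the witness to $\exists a \, \forall x \in a \, \bot$.

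For the verification that $c \in \mathcal{D}_v$, conditions (ii) and (iii) are immediate because $c(w) = \emptyset$, so the inclusions $c(w) \subseteq \bigcup_{\beta < \alpha} \mathcal{D}_w^\beta$ and $\{y \upharpoonright K^{\geq u} : y \in c(w)\} \subseteq c(u)$ are vacuous. Condition (i) is the only piece that uses the classical models at the end-nodes: for each end-node $e \geq v$, I need $c \upharpoonright \{e\} \in \mathcal{D}_e$, and this holds because $M_e \vDash \ZF$ contains an empty set $\emptyset^{M_e}$, and by construction $f_e(\emptyset^{M_e}) = (e, \emptyset)$ lies in $\mathcal{D}_e$; this element agrees with $c \upharpoonright \{e\}$.

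For the forcing claim, I would spell out the definition of $\forall x \in a \, \bot$ using clauses (iv) and (i) of the semantics: it asks that for every $w \geq v$ and every $a' \in \mathcal{D}_w$ with $a' \in c(w)$, we have $w \VDash \bot$. Since $c(w) = \emptyset$ there is no such $a'$, so the universal statement holds vacuously, and $\bot$ is never forced (by clause~(v) of the propositional semantics, extended to the set-theoretic case). Persistence automatically gives the same at all $w \geq v$ since $f_{vw}(c) = c \upharpoonright K^{\geq w}$ still takes only the value $\emptyset$.

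No serious obstacle is anticipated; the only subtle step is (i), which is the recurring pattern in this section, namely that whenever a witness is needed, one must also produce a matching witness in each end-node model $M_e$ so that the restrictions land in $\mathcal{D}_e$. Here this reduces to the trivial fact that $M_e$ satisfies the empty-set axiom, so the construction goes through without incident.
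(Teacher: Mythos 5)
Your proof is correct and matches the paper's argument: the paper simply takes the constantly-empty function on $K^{\geq v}$ as the witness, noting it lies in $\mathcal{D}_v$. You have just filled in the routine verification of the domain conditions (with condition (i) reducing to the empty set of $M_e$, exactly as you say) and the vacuous forcing of $\forall x \in a\,\bot$.
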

\begin{proof}
    For every $v \in K$ consider the empty function with domain $K^{\geq v}$. This is an element of $\mathcal{D}_v$ and witnesses the axiom of empty set.
\end{proof}

\begin{proposition}
    The model $(K,\leq,\mathcal{D})$ satisfies the axiom of infinity.
\end{proposition}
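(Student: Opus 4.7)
The plan is to construct the standard $\omega$ inside the blended model. For each meta-natural $n \in \omega$ and each node $v \in K$, I would define $n_v \in \mathcal{D}_v$ by recursion on $n$: take $0_v$ to be the empty function with domain $K^{\geq v}$ (as used in the proof of the empty set axiom), and set $(n+1)_v$ to be the function on $K^{\geq v}$ given by $(n+1)_v(w) = \Set{f_{vw}(k_v)}{k \leq n}$ for all $w \geq v$. Then let $\omega_v$ be the function with $\omega_v(w) = \Set{f_{vw}(n_v)}{n \in \omega}$ for every $w \geq v$; the claim is that $\omega_v \in \mathcal{D}_v$ and that it witnesses the axiom of infinity at $v$.

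A straightforward induction on $n$ first shows that $n_v \upharpoonright K^{\geq w} = n_w$ for all $w \geq v$, so that $f_{vw}(n_v) = n_w$ and the definition of $\omega_v$ is coherent with the restriction maps. To see $\omega_v \in \mathcal{D}_v$, I would verify the three clauses of the definition in turn. Clause (iii) follows from the identity $f_{vw}(n_v) \upharpoonright K^{\geq u} = f_{vu}(n_v)$. For clause (i), absoluteness of the finite von Neumann ordinals and of $\omega$ between transitive models of $\ZF$ gives $\omega_v \upharpoonright \set{e} = f_e(\omega^{M_e})$ for every end-node $e \geq v$, which lies in $\mathcal{D}_e$ because the axiom of infinity holds in $M_e$. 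Clause (ii) then holds with $\alpha = \omega + 1$, since each $n_w$ sits at a finite level of $\mathcal{D}_w$.

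To finish, I would check that $\omega_v$ witnesses the axiom at $v$: inhabitation is immediate from $0_v \in \omega_v(v)$, and for the closure conjunct, at any $w \geq v$ and any $x \in \omega_v(w)$ we have $x = n_w$ for some $n \in \omega$, so that $y = (n+1)_w \in \omega_v(w)$ satisfies $x = n_w \in (n+1)_w(w) = y(w)$, giving $w \VDash x \in y$.

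The main technical point I anticipate is clause (i) in the verification that $\omega_v \in \mathcal{D}_v$: it leans on the standing assumption of the section that $\Ord^{M_{e_0}} = \Ord^{M_{e_1}}$ for all end-nodes, together with the absoluteness of $\omega$ between the transitive models $M_e$, so that the single function $\omega_v$ simultaneously corresponds to the true $\omega^{M_e}$ at every end-node.
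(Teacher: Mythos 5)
Your proposal is correct and follows essentially the same route as the paper: define the finite ordinals $n_v$ by recursion with $0_v$ the empty function and $(n+1)_v(w) = \{0_w,\dots,n_w\}$, then take $\omega_v(w) = \Set{n_w}{n<\omega}$ and use absoluteness of $\omega$ at the end-nodes to get $\omega_v \in \mathcal{D}_v$. You simply spell out the coherence of the restriction maps and the verification of clauses (i)--(iii) in more detail than the paper does.
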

\begin{proof}
    By recursion on natural numbers, we will define elements $n_v \in \mathcal{D}_v$ simultaneously for every $v \in K$. Let $0_v$ be the empty set as defined in the proof of \Cref{Prop: axiom of empty set}. Then, if $m_v$ has been defined for all $m < n$, let $n_v$ be the function with $n_v(w) = \set{0_w,\dots,(n-1)_w}$ for all $w \geq v$. This finishes the recursive definition. It follows inductively that every $n_v \in \mathcal{D}_v$, again paying special attention at the end-nodes: the sets $n_e$ correspond to the finite ordinal $n \in M_e$.
    
    Finally, let $\omega_v(w) = \Set{n_w}{n < \omega}$ for all $w \geq v$. To see that $\omega_v \in \mathcal{D}_v$ note that, for every end-node $e \geq v$, $f_{ve}(\omega_v) = \omega_e \in \mathcal{D}_e$ as $M_e$ satisfies the axiom of infinity.
    
    It follows that $\omega_v$ is a witness for the axiom of infinity at the node $v$. 
\end{proof}

\begin{proposition}
    The model $(K,\leq,\mathcal{D})$ satisfies the axiom scheme of separation.
\end{proposition}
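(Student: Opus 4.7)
The plan is, given $v \in K$, $a \in \mathcal{D}_v$, and parameters $\bar y \in \mathcal{D}_v$ for a formula $\phi(x,\bar y)$, to define the candidate witness $b$ as the function with domain $K^{\geq v}$ given by
$$b(w) = \Set{x \in a(w)}{w \VDash \phi(x, f_{vw}(\bar y))}$$
for every $w \geq v$ (where $f_{vv}$ is the identity). Once I know $b \in \mathcal{D}_v$, the forcing clause for $\in$ at $w \geq v$ immediately yields $w \VDash x \in b \leftrightarrow (x \in a \wedge \phi(x, f_{vw}(\bar y)))$ for all $x \in \mathcal{D}_w$, so $b$ witnesses separation at $v$.

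The substantial work is therefore verifying the three membership conditions for $\mathcal{D}_v$. Condition (iii) will follow from persistence of the forcing relation together with the compositional identity $f_{vu} = f_{wu} \circ f_{vw}$: if $x \in b(w)$ and $u \geq w$, then $f_{wu}(x) \in a(u)$ because $a \in \mathcal{D}_v$ already satisfies (iii), while persistence of $\phi$ upgrades $w \VDash \phi(x, f_{vw}(\bar y))$ to $u \VDash \phi(f_{wu}(x), f_{vu}(\bar y))$. Condition (ii) is immediate since $b(w) \subseteq a(w)$, so any ordinal bound $\alpha$ witnessing $a \in \mathcal{D}_v^\alpha$ also works for $b$.

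The main obstacle is condition (i): at each end-node $e \geq v$, I need $b \upharpoonright \{e\} \in \mathcal{D}_e$, i.e.\ $b(e)$ must equal $f_e[b^*]$ for some $b^* \in M_e$. Here the crucial lever is \Cref{Proposition: end-nodes isomorphic to original models}: let $g_e$ denote the inverse of $f_e \colon M_e \to \mathcal{D}_e$ and set $a^* = g_e(a \upharpoonright \{e\}) \in M_e$ and $\bar y^* = g_e(f_{ve}(\bar y))$. The proposition gives $e \VDash \phi(f_e(z), f_{ve}(\bar y))$ iff $M_e \vDash \phi(z, \bar y^*)$ for $z \in M_e$. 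Applying separation inside $M_e$ to the formula $\phi(z, \bar y^*)$ yields a set $b^* = \{z \in a^* : M_e \vDash \phi(z,\bar y^*)\} \in M_e$, and unpacking the definitions gives $b(e) = f_e[b^*]$, hence $b \upharpoonright \{e\} = f_e(b^*) \in \mathcal{D}_e$ as required. The only genuinely nontrivial move is this translation at the end-nodes; after that the construction is bookkeeping, and the forcing equivalence at $v$ holds by construction.
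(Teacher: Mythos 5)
Your proof is correct and follows essentially the same route as the paper: define the witness pointwise by $b(w) = \Set{x \in a(w)}{w \VDash \phi(x,\dots)}$, get condition (i) from separation in the end-node models $M_e$ via \Cref{Proposition: end-nodes isomorphic to original models}, condition (iii) from persistence, and condition (ii) from $b(w) \subseteq a(w)$. You merely spell out the end-node translation and the transition maps more explicitly than the paper does.
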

\begin{proof}
    Let $\phi(x,y_0,\dots,y_n)$ be a formula with all free variables shown. Let $v \in K$, $a \in \mathcal{D}_v$ and $b_0, \dots, b_n \in \mathcal{D}_v$. Define $c$ to be the function with domain $K^{\geq v}$ such that 
    $
        c(w) = \Set{d \in a(w)}{ w \VDash \phi(d,b_0,\dots,b_n)}
    $
    holds for all $w \geq v$. We have that $c \in \mathcal{D}_v$ by the definition of the domains $\mathcal{D}_v$. Again, property (i) follows from the fact that separation holds in $M_e$ for every end-node model $M_e$. Moreover, property (iii) follows by persistence. Finally, $c$ witnesses separation from $a$ by $\phi$ with parameters $b_i$. 
\end{proof}

\begin{proposition}
    If $K$ is finite, then the model $(K,\leq,\mathcal{D})$ satisfies the axiom scheme of collection.
\end{proposition}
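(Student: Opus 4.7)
The plan is to fix $v \in K$ and $a \in \mathcal{D}_v$, assume $v \VDash \forall x \in a \, \exists y \, \phi(x,y)$ for some $\mathcal{L}_\in$-formula $\phi$, and then construct a single witness $b \in \mathcal{D}_v$ for collection by combining the collection axiom applied inside each classical end-node model $M_e$ with the finiteness of $K$.

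First, by persistence the assumption holds at every end-node $e \geq v$, and by \Cref{Proposition: end-nodes isomorphic to original models} this translates under the isomorphism $f_e$ to $M_e \vDash \forall x \in a^{(e)} \, \exists y \, \phi(x, y)$, where $a^{(e)} = f_e^{-1}(a \upharpoonright \set{e}) \in M_e$. Applying the collection axiom inside each $M_e$ produces a set $B^{(e)} \in M_e$ that bounds witnesses for $a^{(e)}$. Letting $\beta_e < \Ord^{M_e}$ denote the $M_e$-rank of $B^{(e)}$, I use that $E_{K} \cap K^{\geq v}$ is finite (because $K$ is) together with the fact that $\Ord^M$ is a limit ordinal to obtain a single ordinal $\beta = \max_{e} \beta_e + 1$ that still lies strictly below $\Ord^M$.

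Next, I define
\[
    b(w) = \Set{y \in \mathcal{D}_w^\beta}{\exists d \in a(w) \text{ with } w \VDash \phi(d, y)}
\]
for each $w \geq v$, and verify that $b$ lies in $\mathcal{D}_v$. Condition (i) holds because at each end-node $b \upharpoonright \set{e}$ corresponds under $f_e$ to a subclass of $V_\beta^{M_e}$ cut out by an $\mathcal{L}_\in$-formula with parameters in $M_e$, hence a set by separation inside $M_e$. Condition (ii) is immediate from the rank bound $\beta$. Condition (iii) follows from persistence of $\phi$: if $w \VDash \phi(d, y)$ and $u \geq w$, then $u \VDash \phi(f_{wu}(d), y \upharpoonright K^{\geq u})$ with $f_{wu}(d) \in a(u)$, so $y \upharpoonright K^{\geq u}$ belongs to $b(u)$. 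The collection property for $b$ at end-nodes is then immediate from the choice of $B^{(e)}$.

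The hard part will be verifying the collection property of $b$ at non-end-nodes: for every non-end $w \geq v$ and every $d \in a(w)$ a witness $y \in \mathcal{D}_w^\beta$ for $\phi(d, y)$ must exist, even though an arbitrary witness supplied by the assumption need not have bounded rank. My plan is a top-down recursion on the finite tree $K^{\geq v}$: starting at the end-nodes, where bounded-rank witnesses exist by the first step, I inductively assemble a bounded-rank witness at each non-end node $w$ by gluing together coherent bounded-rank witnesses already obtained at the direct successors of $w$, and choosing the value of $y$ at $w$ in accordance with the syntactic structure of $\phi$ and the coherence requirement (iii) so that $w \VDash \phi(d, y)$ is preserved. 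Finiteness of $K$ is used twice here: to collapse the end-node bounds $\beta_e$ into the single ordinal $\beta < \Ord^M$, and to guarantee that this recursion over $K^{\geq v}$ terminates after finitely many steps with a rank bound still below $\Ord^M$.
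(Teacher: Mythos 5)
There is a genuine gap, and it sits exactly where you locate the ``hard part''. Your rank bound $\beta$ is extracted only from applications of Collection inside the end-node models $M_e$: it bounds the rank of \emph{some} witness for each end-node instance $M_e \vDash \exists y\, \phi(d^{(e)},y)$. But a witness $y$ for $d \in a(w)$ at a non-end node $w$ must satisfy $w \VDash \phi(d,y)$ in the Kripke sense, which is strictly stronger than its restrictions being witnesses at the end-nodes above $w$, and the least possible rank of such a $y$ need not be controlled by $\beta$ at all. For instance, take $\phi(x,y)$ to be $(\theta \vee \neg\theta) \vee \forall z\,(z \in y \leftrightarrow z \in c)$, where $\theta$ is a sentence not decided at $w$ (such as $\mathsf{CH}$ in the paper's example model) and $c$ is a parameter of very large rank: every end-node forces $\theta \vee \neg\theta$, so the empty set is a witness there and your $\beta$ is tiny, while at $w$ the only witness is the restriction of $c$, by the extensionality argument. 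So the statement you plan to prove in the final step --- that every $d \in a(w)$ has a witness in $\mathcal{D}_w^\beta$ --- is false in general, and no top-down gluing can establish it. Independently of this, the proposed recursion has no well-defined step: satisfaction of an arbitrary $\phi$ at $w$ is not a compositional function of the successors' data together with a choice of $y(w)$, so ``choosing $y(w)$ in accordance with the syntactic structure of $\phi$'' is not an argument.

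The paper's proof avoids the issue entirely. It lets $\alpha$ be the least ordinal such that for \emph{every} node $w \geq v$ --- not just the end-nodes --- and every $x \in a(w)$ there is a witness in $\mathcal{D}_w^\alpha$, using the finiteness of $K$ to combine the bounds over the finitely many nodes, and then simply sets $b(w) = \mathcal{D}_w^\alpha$. Because $b(w)$ is the full rank-initial segment of the domain rather than a set of witnesses cut out by $\phi$, membership of $b$ in $\mathcal{D}_v$ reduces to the fact that $(V_\alpha)^{M_e}$ is a set in $M_e$, and the collection property of $b$ is immediate from the choice of $\alpha$. To repair your argument, replace the end-node-derived $\beta$ by such an $\alpha$ taken over all nodes and use $b(w) = \mathcal{D}_w^\alpha$; your separation-style definition of $b(w)$ and the entire closing recursion then become unnecessary.
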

\begin{proof}
    Let $v \in K$, $\phi(x,y)$ be a formula (possibly with parameters), and $a \in \mathcal{D}_v$. We need to show that:
    \begin{align*}
        v \VDash \forall x \in a \exists y \ \phi(x,y) \rightarrow \exists b \forall x \in a \exists y \in b \ \phi(x,y).
    \end{align*}
    So let us additionally assume that $v \VDash \forall x \in a \exists y \phi(x,y)$, i.e., for every $w \geq v$ and every $x \in a(w)$ there exists some $y \in \mathcal{D}_w$ such that $w \VDash \phi(x,y)$. Let $\alpha$ be the minimal ordinal such that for every $w \geq v$ and $x \in a(w)$, there is some $y \in \mathcal{D}_w^\alpha$ with $w \VDash \phi(x,y)$. Note that $\alpha < \Ord^{M_e}$ as $K$ is finite. Define $b$ to be the function with domain $K^{\geq v}$ such that $b(v) = \mathcal{D}_w^\alpha$. It follows that $b \in \mathcal{D}_v$, where the case for end-nodes $e$ follows from the fact that $(V_\alpha)^{M_e}$ is a set in $M_e$. Hence, $b$ is a witness for the above instance of the collection scheme.
\end{proof}

\begin{proposition}
    The model $(K,\leq,\mathcal{D})$ satisfies the powerset axiom.
\end{proposition}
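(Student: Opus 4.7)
The plan is to mimic the pattern of the earlier axioms by defining a witness as a function on $K^{\geq v}$ and to bound its rank by pulling back through the end-node models. Fix $v \in K$ and $a \in \mathcal{D}_v$; following the shape of the previous proofs I would define the candidate power set as the function
\[
    p(w) = \Set{x \in \mathcal{D}_w}{w \VDash x \subseteq f_{vw}(a)}
\]
for each $w \geq v$. Once $p \in \mathcal{D}_v$ is established, it witnesses the powerset axiom almost by definition: for any $u \geq v$ and $x \in \mathcal{D}_u$, $u \VDash x \in p$ holds iff $x \in p(u)$ iff $u \VDash x \subseteq f_{vu}(a)$, so $v \VDash \forall x (x \in p \leftrightarrow x \subseteq a)$.

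The nontrivial work is checking the three clauses in the definition of $\mathcal{D}_v^\alpha$. For clause (i), at each end-node $e \geq v$ I would invoke \Cref{Proposition: end-nodes isomorphic to original models}: that proposition identifies $\mathcal{D}_e$ with $M_e$ via $f_e$, so $e \VDash x \subseteq f_{ve}(a)$ translates into $M_e \vDash f_e^{-1}(x) \subseteq f_e^{-1}(f_{ve}(a))$, and $p(e)$ becomes the $f_e$-image of the $M_e$-power set of $f_e^{-1}(f_{ve}(a))$; hence $p \upharpoonright \set{e} \in \mathcal{D}_e$ because $M_e$ satisfies power set. Clause (iii) follows by persistence: if $y \in p(w)$ and $u \geq w$, then $w \VDash y \subseteq f_{vw}(a)$ propagates to $u \VDash f_{wu}(y) \subseteq f_{vu}(a)$, i.e., $y \upharpoonright K^{\geq u} \in p(u)$.

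The main obstacle is clause (ii): I need a uniform ordinal bound $\gamma$ such that $p(w) \subseteq \bigcup_{\beta < \gamma} \mathcal{D}_w^\beta$ for every non-end-node $w \geq v$, even though $p(w)$ is defined impredicatively through the forcing relation. I would argue as follows. Fix $\alpha$ with $a \in \mathcal{D}_v^\alpha$. For every $w \geq v$ and every $x \in p(w)$, persistence gives $x(u) \subseteq a(u)$ for every $u \geq w$. At non-end-nodes $u$ this immediately yields $x(u) \subseteq \bigcup_{\beta<\alpha} \mathcal{D}_u^\beta$. At an end-node $e \geq w$, writing $a \upharpoonright \set{e} = f_e(z)$ with $z \in (V_\alpha)^{M_e}$, we get $f_e^{-1}(x \upharpoonright \set{e}) \subseteq z \subseteq (V_\alpha)^{M_e}$, which forces $f_e^{-1}(x \upharpoonright \set{e}) \in (V_{\alpha+1})^{M_e}$ and hence $x \upharpoonright \set{e} \in \mathcal{D}_e^{\alpha+1}$. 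Assembling these observations, every $x \in p(w)$ lies in $\mathcal{D}_w^{\alpha+1}$, so $p \in \mathcal{D}_v^{\alpha+2}$. The delicate conceptual point is that this rank bound depends crucially on the end-node isomorphism: it is what lets an impredicatively defined object still be seen to sit in the cumulative hierarchy of the blended model.
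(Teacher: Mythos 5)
Your proposal is correct and follows essentially the same route as the paper: the same witness $b(w) = \Set{c \in \mathcal{D}_w}{w \VDash c \subseteq f_{vw}(a)}$, with clause (i) reduced to power set in $M_e$ via \Cref{Proposition: end-nodes isomorphic to original models}. The paper dismisses clauses (ii) and (iii) as following ``easily,'' whereas you supply the actual rank bound $\alpha+2$; that detail is right and is the only genuinely delicate point.
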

\begin{proof}
    Let $v \in K$ and $a \in \mathcal{D}_v$. Define a function $b$ with domain $K^{\geq v}$ such that 
    $$
        b(w) = \Set{c \in \mathcal{D}_w}{w \VDash c \subseteq f_{vw}(a)}
    $$
    for all $w \geq v$. We have to show that $b \in \mathcal{D}_v$. Observe that for every end-node $e \geq v$, $f_{ve}(b)$ corresponds to $(\P(a))^{M_e}$, and hence condition (i) is satisfied. Conditions (ii) and (iii) follow easily.
\end{proof}

\begin{proposition}
    The model $(K,\leq,\mathcal{D})$ satisfies the axiom scheme of set induction.
\end{proposition}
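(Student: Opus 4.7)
The plan is to establish the scheme by transfinite induction on the rank $\alpha \in \Ord^{M_e}$ at which an element appears in the cumulative hierarchy $\mathcal{D}_w = \bigcup_\alpha \mathcal{D}_w^\alpha$ used to build the domains. Fix a formula $\phi(x)$ (suppressing parameters), a node $v \in K$, and assume that $v \VDash \forall a (\forall x \in a \ \phi(x) \rightarrow \phi(a))$; it suffices to show $v \VDash \forall a \ \phi(a)$, i.e., that $w \VDash \phi(a)$ for every $w \geq v$ and every $a \in \mathcal{D}_w$.

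The induction claim to prove is: for every ordinal $\alpha$, every $w \geq v$ and every $a \in \mathcal{D}_w^\alpha$, we have $w \VDash \phi(a)$. For the inductive step, fix $w \geq v$ and $a \in \mathcal{D}_w^\alpha$. Instantiating the assumption at $w$ and $a$ yields $w \VDash \forall x \in a \ \phi(x) \rightarrow \phi(a)$, so it is enough to verify the premise $w \VDash \forall x \in a \ \phi(x)$, which unravels via the forcing clauses for bounded universal quantification to: $u \VDash \phi(x)$ for every $u \geq w$ and every $x \in a(u)$.

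Now split into two cases according to whether $u$ is an end-node. If $u \notin E_K$, then condition (ii) of the definition of $\mathcal{D}_w^\alpha$ gives $a(u) \subseteq \bigcup_{\beta < \alpha} \mathcal{D}_u^\beta$, so $x \in \mathcal{D}_u^\beta$ for some $\beta < \alpha$ and the inductive hypothesis applies to the pair $(u,x)$. If $u \in E_K$, condition (i) gives $a \upharpoonright \{u\} \in \mathcal{D}_u^\alpha = f_u[(V_\alpha)^{M_u}]$, so $a(u) = f_u[y]$ for some $y \in (V_\alpha)^{M_u}$; every element of $y$ has $M_u$-rank strictly less than $\alpha$, hence every $x \in a(u)$ lies in $\mathcal{D}_u^\beta$ for some $\beta < \alpha$, and again the inductive hypothesis applies. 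In either case $u \VDash \phi(x)$, completing the induction and thereby the proof.

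I do not anticipate any serious obstacle here: the layered construction of $\mathcal{D}_w^\alpha$ was set up precisely so that the ordinal $\alpha$ is a uniform rank across both the end-node and non-end-node strata, and set-membership within a blended set strictly decreases this rank. The one point requiring genuine care is harmonising the non-end-node layer from condition (ii) with the end-node layer obtained from $f_u[(V_\alpha)^{M_u}]$, which is handled by the standard observation that $z \in y \in (V_\alpha)^{M_u}$ forces $z \in (V_\beta)^{M_u}$ for some $\beta < \alpha$, together with the fact that the transition maps $f_{wu}$ are restrictions and therefore respect these rank stratifications.
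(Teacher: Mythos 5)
Your proof is correct and follows exactly the route the paper indicates: a transfinite induction on the rank $\alpha$ of $\mathcal{D}_w^\alpha$, carried out simultaneously for all nodes above $v$, using condition (ii) at non-end-nodes and the transitivity/rank structure of the end-node models $M_e$ at end-nodes. The paper's own proof is only a two-sentence sketch of this argument, so your write-up is simply a fleshed-out version of the same idea.
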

\begin{proof}
    The scheme of set induction follows by an induction on the rank $\alpha$ of $\mathcal{D}_v^\alpha$ simultaneously for all $v \in K$. In particular, we use the fact that the models $M_e$ associated with the end-nodes are transitive models of set theory.
\end{proof}

Let us summarise the results of this section in the following theorem. 

\begin{theorem}
    If $K$ is finite, then the model $(K,\leq,\mathcal{D})$ satisfies $\IZF$. For arbitrary $K$, the model $(K,\leq,\mathcal{D})$ satisfies $\IZF - \mathrm{Collection}$. \qed
\end{theorem}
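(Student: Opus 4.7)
The statement is essentially a bookkeeping summary of the propositions established in the previous subsection, so my plan is simply to assemble the earlier results. Concretely, I would observe that each axiom of $\IZF$ listed in Figure \ref{Figure: IZF} has been verified in the blended model $(K,\leq,\mathcal{D})$ by a dedicated proposition above: Extensionality, Empty Set, Pairing, Union, Infinity, Power Set, Separation, and Set Induction have each been checked without any finiteness assumption on $K$, while Collection was verified under the hypothesis that $K$ is finite. The two conjuncts of the theorem are obtained by partitioning these propositions accordingly.

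The first step is to note that $\IZF - \mathrm{Collection}$ is the deductive closure of the axioms other than Collection; since each such axiom is valid in $(K,\leq,\mathcal{D})$ by the corresponding proposition, and the blended models are sound with respect to $\IQC$, every theorem of $\IZF - \mathrm{Collection}$ is forced at every node of $K$. This gives the second sentence of the theorem for arbitrary $K$.

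The second step is to add the Collection axiom under the extra assumption that $K$ is finite. Here we invoke exactly the preceding proposition on Collection, which uses finiteness of $K$ in the key place (choosing a uniform ordinal $\alpha < \Ord^{M_e}$ bounding witnesses across the finitely many nodes $w \geq v$). Together with the axioms already validated in the first step, this yields validity of all of $\IZF$ in $(K,\leq,\mathcal{D})$.

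Since every step is a direct citation of an earlier proposition, there is no genuine obstacle to overcome; the only point worth flagging explicitly is that the finiteness of $K$ is used solely through the Collection proposition, so that dropping finiteness costs us exactly the Collection scheme and nothing else. I would therefore write the proof as a one-line reference to the propositions above, splitting according to whether $K$ is finite.
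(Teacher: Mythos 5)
Your proposal is correct and matches the paper exactly: the theorem is stated there as a summary of the preceding propositions (it even carries an immediate \qed), with finiteness of $K$ entering only through the Collection scheme. Nothing further is needed.
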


\subsection{An example} 

Let us consider an easy example of a blended model with a failure of the law of excluded middle.

\begin{example}
    To illustrate our construction above, we will construct a Kripke model $(K,\leq,\mathcal{D})$ such that $(K,\leq,\mathcal{D}) \not \VDash \mathsf{CH} \vee \neg\mathsf{CH}$. Take $(K,\leq)$ to be the three element Kripke frame $(K,\leq)$ with $K = \set{v,e_0,e_1}$ with $\leq$ being the reflexive closure of the relation defined by $v \leq e_0$ and $v \leq e_1$ (see \Cref{Figure: blended model with CH and not CH}).
    
    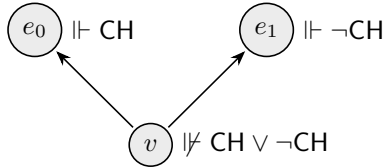
\begin{figure}[h]
    \begin{tikzpicture}[modal]
      \node[world] (v) [label=right:$\not \VDash \mathsf{CH} \vee \neg\mathsf{CH}$] {$v$};
      \node[world] (e_0) [above left=of v, label=right:$\VDash \mathsf{CH}$] {$e_0$};
      \node[world] (e_1) [above right=of v, label=right:$\VDash \neg\mathsf{CH}$] {$e_1$};
      
      \path[->] (v) edge (e_0);
      \path[->] (v) edge (e_1);
    \end{tikzpicture}
    
    \caption{An example of a blended model constructed from end-models $M_{e_0}$ and $M_{e_1}$ with $M_{e_0} \vDash \mathsf{CH}$ and $M_{e_0} \vDash \neg \mathsf{CH}$.}
    \label{Figure: blended model with CH and not CH}
    \end{figure}
    
    Now, let $M$ be any countable transitive model of $\ZFC + \CH$, and take $G$ to be generic for Cohen forcing over $M$. Then we associate the model $M$ with node $e_0$, and $M[G]$ with $e_1$, i.e., $M_{e_0} = M$ and $M_{e_1} = M[G]$. By our construction above and \Cref{Proposition: end-nodes isomorphic to original models}, $(K,\leq,\mathcal{D}), e_0 \VDash \mathsf{CH}$ and $(K,\leq,\mathcal{D}), e_1 \VDash \neg\mathsf{CH}$. Hence, by persistence, $(K,\leq,\mathcal{D}), v \not \VDash \mathsf{CH} \vee \neg\mathsf{CH}$.
    
    In particular, this also shows that $\IZF \not \vdash \mathsf{CH} \vee \neg\mathsf{CH}$, i.e., the law of excluded middle does not hold for assertions regarding the continuum.
\end{example}

One can easily generalise the above argument to obtain the following proposition.

\begin{proposition}
    If $\phi$ is a sentence in the language of set theory such that there are models $M$ and $N$ of $\ZFC$ with $M \vDash \phi$ and $N \vDash \neg \phi$, then $\IZF \not \vdash \phi \vee \neg \phi$. \qed
\end{proposition}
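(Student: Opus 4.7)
The plan is to generalise the construction of the preceding CH example, using the same three-element Kripke frame $K = \{v, e_0, e_1\}$ with $v < e_0$ and $v < e_1$. The main preparatory step is to replace the given $M$ and $N$ (which could be arbitrary models of the respective theories) by countable transitive models $M^*$ and $N^*$ of $\ZFC + \phi$ and $\ZFC + \neg\phi$ whose ordinal heights agree, as required by the definition of a blended model. Consistency of both theories follows from the hypothesis, and the meta-theoretic assumption of a countable transitive model of $\ZFC$ allows us to find such countable transitive models inside a sufficiently tall CTM; a reflection or Löwenheim--Skolem argument inside that CTM can then be used to align their heights. I regard this height-matching step as the only nontrivial subtlety in the argument.

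Once such $M^*$ and $N^*$ are at hand, set $M_{e_0} = M^*$ and $M_{e_1} = N^*$ and form the blended model $(K,\leq,\mathcal{D})$. Since $K$ is finite, the theorem proved above guarantees $(K,\leq,\mathcal{D}) \VDash \IZF$. By \Cref{Proposition: end-nodes isomorphic to original models}, we have $e_0 \VDash \phi$ and $e_1 \VDash \neg\phi$.

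Persistence now rules out both disjuncts at the root: if $v \VDash \phi$ then by persistence $e_1 \VDash \phi$, contradicting $e_1 \VDash \neg\phi$; symmetrically, $v \VDash \neg\phi$ would force $e_0 \VDash \neg\phi$, contradicting $e_0 \VDash \phi$. Hence $v \not \VDash \phi \vee \neg\phi$, and since the blended model is a model of $\IZF$, we conclude $\IZF \not\vdash \phi \vee \neg\phi$, as required. Everything beyond the height-matching step is a direct reuse of the apparatus set up in the preceding sections.
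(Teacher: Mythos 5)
Your overall strategy is the paper's own: the proposition is obtained by generalising the three-node $\mathsf{CH}$ example, exactly as you do, and your persistence argument at the root is correct and identical to the example's. The genuine problem lies in the preparatory step that you yourself single out as the only subtlety; the justification you offer for it does not work. From the hypothesis you only get \emph{consistency} of $\ZFC+\phi$ and $\ZFC+\neg\phi$, and consistency does not yield countable transitive models. Take $\phi = \mathrm{Con}(\ZFC)$: models of both $\ZFC+\phi$ and $\ZFC+\neg\phi$ exist (the latter by G\"odel's second incompleteness theorem), so the hypothesis of the proposition is met; but every transitive model of $\ZFC$ is an $\omega$-model and hence evaluates the arithmetic sentence $\mathrm{Con}(\ZFC)$ correctly, so under the paper's meta-theoretic assumption (which makes $\mathrm{Con}(\ZFC)$ true) there is \emph{no} transitive model of $\ZFC+\neg\mathrm{Con}(\ZFC)$ at all --- inside a tall CTM or anywhere else. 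Even when both theories do have transitive models, aligning the heights can be impossible: for $\phi$ the sentence ``there is a transitive model of $\ZFC$'', whether $\phi$ holds in a transitive model of $\ZFC$ depends only on its ordinal height (if some transitive model of height $\gamma<\beta$ satisfies $\ZFC$ then so does $L_\gamma$, and $L_\gamma$ is an element of every transitive model of height $\beta$, which computes its satisfaction correctly), so no two transitive models of equal height disagree on this $\phi$. A L\"owenheim--Skolem or reflection argument inside a CTM cannot repair either failure.

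As written, your proof therefore establishes the proposition only under the stronger hypothesis that $M$ and $N$ are transitive models with $\Ord^M=\Ord^N$ --- which is what the blended construction actually consumes, and what the $\mathsf{CH}$ example supplies, since $M$ and $M[G]$ share their ordinals. That restricted reading is almost certainly what the paper intends by its one-line generalisation. If you want the statement for literally arbitrary models of $\ZFC$, the blended-model route is unavailable for sentences like $\mathrm{Con}(\ZFC)$, and you need a different argument, for instance the disjunction property of $\IZF$: from $\IZF\vdash\phi\vee\neg\phi$ one gets $\IZF\vdash\phi$ or $\IZF\vdash\neg\phi$, hence $\ZFC\vdash\phi$ or $\ZFC\vdash\neg\phi$, contradicting the existence of $M$ and $N$. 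Either state the restricted hypothesis explicitly or switch arguments; the bridge you propose between the two readings has a real gap.
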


\section{The propositional logic of blended models}
\label{Section: Propositional logic of blended models}

In this section, we will analysis the propositional logic of blended models and prove the de Jongh property for $\IZF$ with respect to intermediate logics that are characterised by a class of finite trees.

\subsection{Faithful blended models.} 
\label{Section: Faithful classes of blended models}
The aim of this section is to show that we can find a blended model based on every finite tree Kripke frame $(K,\leq)$ that allows us to imitate every valuation on $(K,\leq)$. Let us begin with a definition and several useful observations.

\begin{definition}
    An blended model $(K,\leq,\mathcal{D})$ is called \emph{faithful} if for every valuation $V$ on the Kripke frame $(K,\leq)$ and every propositional letter $p \in \Prop$, there is an $\mathcal{L}_\in$-formula $\phi_p$ such that $\heyting{\phi_p}^{(K,\leq,\mathcal{D})} = V(p)$.
\end{definition}

This notion was first introduced in \cite{Passmann2018}. For further discussion and connections to the de Jongh property, see also \cite{PassmannDJP}.

Given a natural number $n$, let $\psi_n$ be the following sentence\footnote{These sentences were discovered in a discussion with Lorenzo Galeotti and Benedikt Löwe regarding the logics of algebra-valued models of set theory, see also \cite{LoewePassmannTarafder2018}. We adapt them here for the case of Kripke semantics.} in the language of set theory:

\begin{equation*}
    \forall x_0, \dots, x_{n-1} \left( \bigwedge_{i < n} (\forall y \in x_i \forall z \in y \, \bot) \rightarrow  \bigwedge_{i < j < n} x_i = x_j \right).
\end{equation*}
Informally, this sentence asserts that given $n$ subsets of $1$, at least $2$ of them are equal.

The following proposition holds for all Kripke frames with end-nodes and not only for finite trees. We also do not need to assume that $U_v$ is finite; recall that we defined $U_v$ in \Cref{Section: Kripke frames} to be the number of upsets $X \subseteq K^{\geq v}$. 

\begin{proposition}\label{Proposition: counting upsets}
    Let $(K,\leq)$ be a Kripke frame with end-nodes, $(K,\leq,\mathcal{D})$ be a blended model, and $v \in K$. For every natural number $n$, we have that $v \VDash \psi_{n + 1}$ if and only if $n \geq U_v$.
\end{proposition}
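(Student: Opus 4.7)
The sentence $\psi_{n+1}$ expresses, modulo extensionality, that there are at most $n$ subsets of $1$. In a blended model, subsets of $1$ at a node $u$ correspond bijectively to upsets of $K^{\geq u}$, via the assignment $x \mapsto \{u' \geq u : x(u') \neq \emptyset\}$. The plan is to reduce both directions to this correspondence, together with the elementary observation that inclusion sends upsets of $K^{\geq u}$ injectively into upsets of $K^{\geq v}$ whenever $u \geq v$, and so $U_u \leq U_v$.

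For the direction $n \geq U_v \Longrightarrow v \VDash \psi_{n+1}$, I would fix $u \geq v$ and $x_0, \dots, x_n \in \mathcal{D}_u$ with $u \VDash \bigwedge_{i \leq n} \forall y \in x_i \forall z \in y \, \bot$. The first step is a rigidity observation: for each $u' \geq u$ and each $y \in x_i(u')$, the hypothesis gives $u'' \VDash \forall z \in y \, \bot$ for every $u'' \geq u'$, forcing $y(u'') = \emptyset$; hence $y$ equals the unique canonical empty function $\emptyset_{u'} \in \mathcal{D}_{u'}$, and consequently $x_i(u') \subseteq \{\emptyset_{u'}\}$. The sets $Y_i = \{u' \geq u : x_i(u') \neq \emptyset\}$ are then upsets of $K^{\geq u}$ by condition (iii) of the definition of $\mathcal{D}_u$ applied to $x_i$, together with $\emptyset_{u'} \restriction K^{\geq u''} = \emptyset_{u''}$. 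Since $U_u \leq U_v \leq n$, pigeonhole yields $Y_i = Y_j$ for some $i < j$, whence $x_i(u') = x_j(u')$ for every $u' \geq u$; thus $x_i$ and $x_j$ coincide as functions on $K^{\geq u}$, and so $u \VDash x_i = x_j$.

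For the converse I would prove the contrapositive. Suppose $U_v > n$, and pick $n+1$ pairwise distinct upsets $X_0, \dots, X_n$ of $K^{\geq v}$. Define $x_i \colon K^{\geq v} \to V$ by $x_i(u) = \{\emptyset_u\}$ if $u \in X_i$, and $x_i(u) = \emptyset$ otherwise. Verifying $x_i \in \mathcal{D}_v$ is routine: condition (i) holds because $\emptyset_e$ and $\{\emptyset_e\}$ both lie in $\mathcal{D}_e$; condition (ii) holds because $\{\emptyset_w\} \subseteq \mathcal{D}_w$; and condition (iii) holds because $X_i$ is an upset and $\emptyset_u \restriction K^{\geq u'} = \emptyset_{u'}$. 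The hypothesis of $\psi_{n+1}$ is forced at $v$ vacuously, since any $y \in x_i(u)$ is $\emptyset_u$, which has no members at any successor. On the other hand, for every $i < j$ the upsets $X_i$ and $X_j$ differ at some $u \geq v$, at which $x_i(u) \neq x_j(u)$; hence $x_i \neq x_j$ as functions and $v \not\VDash x_i = x_j$. Therefore the conclusion of $\psi_{n+1}$ fails at $v$, and $v \not\VDash \psi_{n+1}$.

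The main subtlety is the rigidity step in the backward direction: because equality in the blended model is strict equality of functions, one needs the hypothesis $\forall z \in y \, \bot$ (asserted at every successor of $u'$) to pin down every element of $x_i(u')$ as literally the canonical function $\emptyset_{u'}$, rather than merely as a name forced extensionally empty. Once that is in place, the equivalence $Y_i = Y_j \iff x_i = x_j$ is immediate, and the pigeonhole argument counting upsets of $K^{\geq u}$ against $n$ closes both directions.
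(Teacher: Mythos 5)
Your proof is correct and follows essentially the same route as the paper: it identifies the elements satisfying $\forall y \in x\, \forall z \in y\, \bot$ with characteristic functions $1^u_X$ of upsets and then counts them against $U_v$. You are in fact slightly more careful than the paper's own proof, which only treats tuples from $\mathcal{D}_v$ and leaves implicit the reduction for arbitrary $u \geq v$ via $U_u \leq U_v$ that your pigeonhole step makes explicit.
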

\begin{proof}
    Given any upset $X \subseteq K^{\geq v}$, we define the element $1^v_X$ to be the function 
    $$K^{\geq v} \to \bigcup_{w \geq v} \mathcal{D}_w, \ \
    w \mapsto \begin{cases}
        \set{0_w}, &\text{if } w \in X, \\
        \emptyset, &\text{otherwise.}
    \end{cases}$$
    Observe that $1^v_X \in \mathcal{D}_v$ as it is monotone because $X$ is an upset. Further, we have $1^v_X \neq 1^v_Y$ for upsets $X \neq Y$ and therefore, $v \not \VDash 1^v_X = 1^v_Y$. It follows that $v \VDash \forall y \in 1^v_X \forall z \in y \, \bot$ for all upsets $X$ because $1^v_X(w)$ is either empty or contains the empty set for $w \geq v$. We conclude that $v \not \VDash \psi_{n+1}$ for $n < U_v$ taking the $1^v_X$ as witnesses.
    
    Conversely, assume that $n \geq U_v$. We will first show that whenever $v \VDash \forall y \in x \forall z \in y \, \bot$ for some $x \in \mathcal{D}_v$, then $x$ is actually of the form $1^v_X$ for some upset $X \subseteq K^{\geq v}$. For contradiction, assume that $x$ was not of the form $1^v_X$ for some upset $X$. Then there is a node $w \geq v$ such that $x(w)$ contains an element $y$ different from $0_w$. But then there must be a node $u \geq w$ such that $y(w)$ is non-empty. This is a contradiction to $v \VDash \forall y \in x \forall z \in y \, \bot$, and hence, every element $x \in \mathcal{D}_v$ satisfying the above formula must be of the form $1^v_X$. As there are only $U_v$-many elements $1^v_X$, we know that the conclusion of $\psi_{n+1}$ must be true at the node $v$. Hence, $v \VDash \psi_{n+1}$.
\end{proof}

The following proposition is a special case of a more general proposition for Kripke models of predicate logic. 

\begin{proposition}\label{Proposition: blended model endnodes labelling}
    Let $(K,\leq)$ be a Kripke frame with end-nodes, $(K,\leq,\mathcal{D})$ be a blended model and $v \in K$. If $e \not \VDash \phi$ for all end-nodes $e \geq v$, then $v \VDash \neg \phi$.
\end{proposition}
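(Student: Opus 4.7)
The plan is to unfold the definitions of $\neg \phi$ and negation in the Kripke semantics, and then reduce to a contradiction via persistence together with the assumption that every node has an end-node above it.

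First I would recall that $v \VDash \neg \phi$ means $v \VDash \phi \to \bot$, which by the clause for implication says: for all $w \geq v$, if $w \VDash \phi$ then $w \VDash \bot$. Since $w \VDash \bot$ never holds, it suffices to prove the contrapositive statement that no $w \geq v$ forces $\phi$.

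So I would fix $w \geq v$ and suppose for contradiction that $w \VDash \phi$ (with whatever parameters from $\mathcal{D}_w$ are implicit; if $\phi$ is taken with parameters $\bar a \in \mathcal{D}_v$, one uses $f_{vw}(\bar a)$, otherwise the statement is purely for sentences). Since $(K,\leq)$ is a Kripke frame with end-nodes, the definition in \Cref{Section: Kripke frames} gives some $e \in E_K$ with $e \geq w$. By transitivity of $\leq$ we have $e \geq v$ as well. Then by the persistence proposition for blended models, $w \VDash \phi$ entails $e \VDash \phi$ (again, applying $f_{we}$ to any parameters so that everything lives in $\mathcal{D}_e$). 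But the hypothesis of the proposition says $e \not\VDash \phi$ for every end-node $e \geq v$, contradicting what we have just derived.

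I do not anticipate any real obstacle: the argument is essentially a one-line consequence of persistence plus the definition of a frame with end-nodes, and it does not interact with the blended structure of $\mathcal{D}$ at all beyond the already-established persistence lemma. The only point requiring a modicum of care is being explicit about how parameters from $\mathcal{D}_v$ get shipped along the transition maps $f_{vw}$ and $f_{we}$, but this is exactly what persistence is designed to handle, and the compositional identity $f_{we} \circ f_{vw} = f_{ve}$ guarantees there is no ambiguity.
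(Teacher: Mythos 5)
Your argument is correct and is essentially identical to the paper's own proof: unfold $\neg\phi$ via the implication clause, suppose some $w \geq v$ forces $\phi$, pick an end-node $e \geq w$ (which exists since the frame has end-nodes), and derive a contradiction from persistence and the hypothesis. The extra care you take with parameters and the transition maps is fine but not needed beyond what the persistence proposition already provides.
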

\begin{proof}
    By the definition of our semantics, we know that $v \VDash \neg \phi$ if and only if $w \not \VDash \phi$ for all $w \geq v$. Assume that there was a node $w \geq v$ such that $w \VDash \phi$. By persistence we can conclude that $e \VDash \phi$ for every end-node $e \geq w$, a contradiction.
\end{proof}

\begin{theorem}\label{Theorem: blended model with separated end-nodes is faithful}
    Let $(K,\leq,\mathcal{D})$ be a blended model based on a finite tree $(K,\leq)$ with end-nodes $e_0,\dots,e_{n-1}$. If there is a collection of $\in$-sentences $\phi_i$ for $i < n$ such that $e_j \VDash \phi_i$ if and only if $i = j$, then $(K,\leq,\mathcal{D})$ is faithful.
\end{theorem}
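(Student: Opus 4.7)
The plan is to construct, for every $v \in K$, a sentence $\chi_v$ in the language of set theory whose truth set in $(K,\leq,\mathcal{D})$ is exactly the principal upset $K^{\geq v}$. Once these are available, faithfulness is immediate: for any valuation $V$ and propositional letter $p$, the set $V(p) \subseteq K$ is an upset of the finite tree, so $V(p) = \bigcup_{v \in \min V(p)} K^{\geq v}$, and the disjunction clause of the forcing relation gives $\heyting{\phi_p}^{(K,\leq,\mathcal{D})} = V(p)$ for $\phi_p := \bigvee_{v \in \min V(p)} \chi_v$ (with the empty disjunction read as $\bot$ when $V(p) = \emptyset$).

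To build $\chi_v$ I combine two ingredients that together pin down the pair $(U_v, E_v)$; by \Cref{Proposition: uniquely determined nodes}, this pair uniquely determines $v$ in a finite tree. The first ingredient is $\psi_{U_v + 1}$, whose truth set by \Cref{Proposition: counting upsets} is $\{w : U_w \leq U_v\}$. The second ingredient characterises $E_w$ via the separating sentences: I claim $w \VDash \neg \phi_i$ iff $e_i \notin E_w$. The forward direction is immediate from $e_i \VDash \phi_i$; for the converse, if $e_i \not\geq w$ but some $u \geq w$ forced $\phi_i$, then by persistence every end-node above $u$ would force $\phi_i$ and hence equal $e_i$, contradicting $e_i \not\geq u$. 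Writing $I_v := \{i : e_i \in E_v\}$ and $\theta_v := \bigwedge_{i \notin I_v} \neg \phi_i$, this yields $\heyting{\theta_v}^{(K,\leq,\mathcal{D})} = \{w : E_w \subseteq E_v\}$. I then set $\chi_v := \psi_{U_v + 1} \wedge \theta_v$, so that $w \VDash \chi_v$ iff $U_w \leq U_v$ and $E_w \subseteq E_v$.

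Verifying $\heyting{\chi_v}^{(K,\leq,\mathcal{D})} = K^{\geq v}$ is the heart of the argument. The inclusion $\supseteq$ is routine: if $w \geq v$ then $E_w \subseteq E_v$, and the inclusion map from upsets of $K^{\geq w}$ into upsets of $K^{\geq v}$ is well-defined (an upset of $K^{\geq w}$ is still upward-closed within $K^{\geq v}$) and injective, giving $U_w \leq U_v$. For $\subseteq$, suppose $w \VDash \chi_v$ and pick any $e \in E_w$; from $E_w \subseteq E_v$ we get $e \geq v$ as well, so $w$ and $v$ are both predecessors of $e$, and the tree axiom forces them to be comparable. The case $w < v$ is ruled out because then $K^{\geq w} \supsetneq K^{\geq v}$, and $K^{\geq w}$ itself is an upset of $K^{\geq w}$ not contained in $K^{\geq v}$, so the analogous injection now witnesses the strict inequality $U_w > U_v$, contradicting $w \VDash \psi_{U_v + 1}$. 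The main obstacle is exactly this strict-monotonicity argument: it is where the tree hypothesis (rather than merely the existence of end-nodes) is used, both to derive comparability from a shared end-node and to produce the extra upset that forces the strict inequality.
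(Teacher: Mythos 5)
Your proposal is correct and follows essentially the same route as the paper: the same sentences $\chi_v = \psi_{U_v+1} \wedge \bigwedge_{e_i \not\geq v}\neg\phi_i$, the same two supporting propositions, and the same reduction of arbitrary upsets to disjunctions over principal ones. You are somewhat more explicit than the paper in two spots it leaves implicit --- the injection of upsets showing $U$ is strictly decreasing along $<$, and the use of a shared end-node to force comparability of $w$ and $v$ in the tree --- but these are elaborations, not a different argument.
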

\begin{proof}
    Let $(K,\leq,\mathcal{D})$ be a blended model based on a finite tree $(K,\leq)$ with end-nodes $e_0,\dots,e_{n-1}$ such that there is a collection of $\in$-sentences $\phi_i$ for $i < n$ such that $e_j \VDash \phi_i$ if and only if $i = j$.
    
    As $(K,\leq)$ is a finite tree, we know by \Cref{Proposition: uniquely determined nodes} that every node $v \in K$ is uniquely determined by $U_v$ and the set of end-nodes $e \geq v$. 
    
    Let $V$ be a valuation on $(K,\leq)$. For every $p \in \Prop$, we need to find a sentence $\psi_p$ in the language of set theory such that $\heyting{\psi_p}^{(K,\leq,\mathcal{D})} = V(p)$. Due to finiteness of $K$, it suffices to consider upsets of the form $K^{\geq v}$ for some $v \in K$ because more general upsets can be covered by disjunctions.
    
     We will now prove for every $v \in K$ that there is a sentence $\chi_v$ in the language of set theory such that $(K,\leq,\mathcal{D}), w \VDash \chi_v$ if and only if $w \geq v$ (i.e., $w \in K^{\geq v}$). Let $\chi_v$ be the following sentence, where $n = U_v + 1$:
    $$
        \psi_{n} \wedge \bigwedge_{e_i \not\geq v} \neg \phi_i 
    $$
    By \Cref{Proposition: counting upsets} and \Cref{Proposition: blended model endnodes labelling} it is clear that $w \VDash \chi_v$ for all $w \geq v$. For the converse direction, let $w \in K$ such that $w \not \geq v$. There are two cases. 
    
    First, if $w < v$, then $U_w > U_v = n$ and hence $w \not \VDash \psi_n$ by \Cref{Proposition: counting upsets}. Hence, it follows that $w \not \VDash \chi_v$.
    
    Second, if $w \not < v$, then there must be an end-node $e_i \geq w$ such that $e_i \not \geq v$. By assumption $e_i \VDash \phi_i$ and hence, $w \not \VDash \neg \phi_i$. But this means that $w \not \VDash \chi_v$.
    
    This concludes the proof of the theorem.
\end{proof}

\begin{theorem}\label{Theorem: Faithful model based on finite tree}
    Let $(K,\leq)$ be a finite tree. Then there is a faithful blended model $(K,\leq,\mathcal{D})$ based on $(K,\leq)$.
\end{theorem}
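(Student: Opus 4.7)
The plan is to reduce the theorem to \Cref{Theorem: blended model with separated end-nodes is faithful} and then to produce the required separating end-models by Cohen forcing. By that theorem, once I have a collection $\Seq{M_{e_i}}{i < n}$ of transitive models of $\ZFC$ with common ordinal height, together with $\mathcal{L}_\in$-sentences $\phi_0, \dots, \phi_{n-1}$ such that $M_{e_j} \vDash \phi_i$ if and only if $i = j$, \Cref{Proposition: end-nodes isomorphic to original models} transports the separation into the blended model $(K,\leq,\mathcal{D})$ built from the $M_{e_i}$, and faithfulness is immediate.

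To obtain such a family, I would pass from the countable transitive model of $\ZFC$ assumed in the meta-theory to its constructible inner model, thereby fixing a countable transitive $M \vDash \ZFC + \mathrm{GCH}$. Let $n$ be the number of end-nodes of $(K,\leq)$. For each $i < n$, take $G_i$ generic over $M$ for the Cohen forcing $\mathrm{Fn}(\aleph_{i+1}^{M},2)$, and set $M_{e_i} = M[G_i]$. Under GCH each of these posets is ccc and preserves all cardinals and cofinalities, so $M[G_i] \vDash 2^{\aleph_0} = \aleph_{i+1}$, and I would take $\phi_i$ to be an $\mathcal{L}_\in$-formalisation of \menquote{$2^{\aleph_0} = \aleph_{i+1}$}. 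These sentences are pairwise incompatible and so separate the end-models: $M_{e_j} \vDash \phi_i$ iff $\aleph_{j+1} = \aleph_{i+1}$ iff $i = j$. Moreover, forcing extensions do not create new ordinals, so $\Ord^{M_{e_i}} = \Ord^{M}$ for every $i$, and the common ordinal height condition required to form the blended model is automatic.

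The main obstacle is essentially bookkeeping: I need to verify carefully that the chosen Cohen forcings really realise the intended distinct cardinalities of the continuum in the respective extensions, and hence that the $\phi_i$ genuinely separate the end-models classically. This is the standard cardinal-arithmetic analysis of Cohen forcing under GCH, and once it is in place, constructing the blended model from $\Seq{M_{e_i}}{i < n}$ by the procedure of \Cref{Section: blended models} and applying \Cref{Theorem: blended model with separated end-nodes is faithful} concludes the proof. An alternative, should I wish to avoid GCH assumptions on the ground model, would be to use mutually generic Cohen reals over a common $M$ and distinguish the extensions by the presence of specifically definable generic reals; but the cardinality approach above keeps the distinguishing sentences parameter-free and uniform in $i$, which is the cleanest route.
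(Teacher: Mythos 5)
Your proposal is correct and follows essentially the same route as the paper: reduce to \Cref{Theorem: blended model with separated end-nodes is faithful} by building the blended model from generic extensions $M[G_i]$ of a fixed countable transitive model with $M[G_i] \vDash 2^{\aleph_0} = \aleph_{i+1}$, using these cardinal-arithmetic sentences as the separating $\phi_i$. You merely spell out details the paper delegates to Kunen (arranging GCH in the ground model, the specific Cohen posets, cardinal preservation) and you rightly note that forcing preserves ordinal height, which the construction of blended models requires.
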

\begin{proof}
    Let $e_0,\dots,e_{n-1}$ be the set of end-nodes of $(K,\leq)$. Let $M$ be a countable transitive model of $\ZFC$ set theory. By set-theoretic forcing, we can obtain generic extensions $M[G_i]$ of $M$ such that $M[G_i] \vDash 2^{\aleph_0} = \aleph_{i+1}$ for every $i < n$ (see, e.g., \cite{Kunen1980} for details). Let $M_{e_i} = M[G_i]$, and $(K,\leq,\mathcal{D})$ be the blended model obtained from $\Seq{M_i}{i < n}$. Clearly,  $M_{e_i} \vDash 2^{\aleph_0} = \aleph_{j+1}$ if and only if $i = j$. This implies, by \Cref{Proposition: end-nodes isomorphic to original models}, that $e_i \VDash 2^{\aleph_0} = \aleph_{j+1}$ if and only if $i = j$. In this situation, we can apply \Cref{Theorem: blended model with separated end-nodes is faithful} to conclude that $(K,\leq,\mathcal{D})$ is faithful.
\end{proof}

\subsection{The de Jongh property for $\IZF$}

In this section, we will draw conclusions regarding the de Jongh property of $\IZF$ from the main result of the previous section.

\begin{theorem}\label{Theorem: IZF de Jongh for finite trees}
    Intuitionistic set theory $\IZF$ has the de Jongh property with respect to every intermediate logic $\mathbf{J}$ that is characterised by a class of finite trees.
\end{theorem}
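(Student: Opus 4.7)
The plan is to establish $\Logic(\IZF(\mathbf{J})) = \mathbf{J}$ by proving each inclusion in turn. The inclusion $\mathbf{J} \subseteq \Logic(\IZF(\mathbf{J}))$ is immediate: intermediate logics are closed under substitution of propositional variables by arbitrary sentences, so any instance $\phi^\sigma$ of a theorem $\phi$ of $\mathbf{J}$ is still in $\mathbf{J}$ and therefore an axiom of $\IZF(\mathbf{J})$.

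For the converse inclusion, I will argue contrapositively. Given $\phi \notin \mathbf{J}$, the hypothesis that $\mathbf{J}$ is characterised by some class $\mathcal{K}$ of finite trees yields a tree $(K,\leq) \in \mathcal{K}$, a valuation $V$, and a node $v_0 \in K$ at which $\phi$ is refuted. I then apply \Cref{Theorem: Faithful model based on finite tree} to obtain a faithful blended model $(K,\leq,\mathcal{D})$ based on this frame, and use faithfulness to choose $\mathcal{L}_\in$-sentences $\sigma(p)$ with $\heyting{\sigma(p)}^{(K,\leq,\mathcal{D})} = V(p)$. A routine induction on propositional formulas then yields $\heyting{\psi^\sigma}^{(K,\leq,\mathcal{D})} = \Set{w \in K}{K,V,w \VDash \psi}$ for every propositional $\psi$, so that $\phi^\sigma$ fails at $v_0$ in the blended model.

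To convert this semantic refutation into the desired unprovability $\IZF(\mathbf{J}) \not\vdash \phi^\sigma$, it remains to verify that the blended model is actually a model of $\IZF(\mathbf{J})$, after which soundness for $\IQC$ finishes the argument. The axioms of $\IZF$ hold by the results of \Cref{Section: blended models}, since $K$ is finite. For the instances of $\mathbf{J}$, I will invoke the truth-set--valuation correspondence in the opposite direction: any substitution $\tau: \Prop \to \mathcal{L}_\in^\mathsf{sent}$ induces, by persistence of forcing, a Kripke valuation $V^\tau$ on $(K,\leq)$ defined by $V^\tau(p) = \heyting{\tau(p)}^{(K,\leq,\mathcal{D})}$. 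The same induction then shows $\heyting{\psi^\tau}^{(K,\leq,\mathcal{D})} = \Set{w \in K}{K,V^\tau,w \VDash \psi}$, and since $(K,\leq) \in \mathcal{K}$ validates every formula of $\mathbf{J}$, each instance $\psi^\tau$ with $\psi \in \mathbf{J}$ is forced throughout the blended model.

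The main obstacle of the whole argument has in fact already been overcome in \Cref{Theorem: Faithful model based on finite tree}, where faithful blended models based on arbitrary finite trees are constructed by separating the end-nodes via variants of the continuum hypothesis. Once faithfulness is available, the remainder is the standard back-and-forth translation between propositional Kripke forcing on $(K,\leq)$ and forcing of set-theoretic instances in the blended model, combined with soundness of blended models for $\IQC$.
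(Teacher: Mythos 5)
Your proposal is correct and follows essentially the same route as the paper: contraposition, a faithful blended model over a countermodel frame obtained from \Cref{Theorem: Faithful model based on finite tree}, and the induction on propositional formulas transferring the Kripke refutation of $\phi$ to a refutation of $\phi^\sigma$ in the blended model. The one point where you are more explicit than the paper is in verifying that the blended model validates every substitution instance of a $\mathbf{J}$-theorem (so that it really is a model of $\IZF(\mathbf{J})$ and not just of $\IZF$); the paper leaves this implicit, and your argument via the induced valuation $V^\tau(p) = \heyting{\tau(p)}^{(K,\leq,\mathcal{D})}$ together with $\Logic(K,\leq) \supseteq \mathbf{J}$ for each $(K,\leq) \in \mathcal{K}$ is the right way to make it precise.
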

\begin{proof}
    Let $\mathbf{J}$ be an intermediate logic with $\Logic(\mathcal{K}) = \mathbf{J}$, where $\mathcal{K}$ is a class of finite trees. We have to show that $\Logic(\IZF(\mathbf{J})) = \mathbf{J}$, i.e., for every propositional formula, we have that:
    \begin{equation*}
        \mathbf{J} \vdash \phi \text{ if and only if } \IZF(\mathbf{J}) \vdash \phi^\sigma \text{ for all substitutions } \sigma: \Prop \to \mathcal{L}_\in^\sent.
    \end{equation*}
    The direction from left to right is immediate from the definition of $\IZF(\mathbf{J})$. We will prove the converse direction by contraposition.
    
    Assume that there is $\phi$ such that $\mathbf{J} \not \vdash \phi$. As $\mathbf{J}$ is characterised by $\mathcal{K}$, there is a frame $(K,\leq) \in \mathcal{K}$ and a valuation $V$ such that $(K,\leq), V \not \Vdash \phi$. By \Cref{Theorem: Faithful model based on finite tree} and the assumption that $\mathcal{K}$ consists of finite trees, we can find a faithful blended model $(K,\leq,\mathcal{D})$ based on $(K,\leq)$. For every propositional letter $p \in \Prop$, let $\psi_p$ be a sentence in the language of set theory such that $\heyting{\psi_p}^{(K,\leq,\mathcal{D})} = V(p)$. Define an assignment $\sigma: \Prop \to \mathcal{L}_\in^\sent$ by $\sigma(p) = \psi_p$. 
    
    We prove by induction on propositional formulas $\chi$, simultaneously for all $v \in K$ that:
    $$
        (K,\leq), v \Vdash \chi \text{ if and only if } (K,\leq,\mathcal{D}),v \VDash \chi^\sigma.
    $$
    
    The base case for propositional letters follows directly from the definition of $\sigma$. Furthermore, the induction cases for the connectives $\rightarrow$, $\wedge$ and $\vee$ follow directly from the fact that their semantics coincide in Kripke models for $\IPC$ and in blended models. This finishes the induction.
    
    Hence, it follows from the induction that $(K,\leq,\mathcal{D}) \not \VDash \phi^\sigma$, and therefore, $\phi \notin \Logic(\IZF(\mathbf{J}))$. This finishes the proof of the theorem.
\end{proof}

\begin{corollary}
    Intuitionistic set theory $\IZF$ has the de Jongh property.
\end{corollary}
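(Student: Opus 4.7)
The plan is to derive this corollary directly from the preceding main theorem (\Cref{Theorem: IZF de Jongh for finite trees}) by instantiating the intermediate logic $\mathbf{J}$ with $\IPC$ itself. The only ingredient needed beyond that theorem is the classical completeness result, \Cref{Proposition: IPC characterised by finite trees}, which states that $\IPC$ is characterised by the class of all finite trees. Thus $\IPC$ satisfies the hypothesis of the main theorem, and we obtain $\Logic(\IZF(\IPC)) = \IPC$.

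To conclude, I would note that $\IZF(\IPC) = \IZF$: since $\IZF$ is already formulated in intuitionistic predicate logic $\IQC$, closing it under $\IPC$ adds no new theorems. Hence $\Logic(\IZF) = \IPC$, which, unpacking \Cref{Definition:de Jongh property}, is exactly the assertion that $\IZF$ has the de Jongh property.

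There is no real obstacle here; the corollary is a one-line specialisation of the main theorem. The only point that deserves a brief explicit remark is the identification $\IZF(\IPC) = \IZF$, so that the conclusion of the theorem translates into the statement about the propositional logic of $\IZF$ itself rather than of some extension.
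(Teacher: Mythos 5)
Your proposal is correct and follows exactly the same route as the paper: apply \Cref{Theorem: IZF de Jongh for finite trees} with $\mathbf{J} = \IPC$, using \Cref{Proposition: IPC characterised by finite trees}. Your explicit remark that $\IZF(\IPC) = \IZF$ is a small clarification the paper leaves implicit, but it does not change the argument.
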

\begin{proof}
    By \Cref{Proposition: IPC characterised by finite trees}, we know that $\IPC$ is complete with respect to the class of all finite trees, i.e., this class characterises $\IPC$. By the previous \Cref{Theorem: IZF de Jongh for finite trees}, this implies that $\IZF$ has the de Jongh property.
\end{proof}

More examples of logics that are characterised by classes of finite trees are Gödel-Dummett logic $\LC$, the Gabbay-de Jongh logics $\mathbf{T}_n$, and the logics of bounded depth $\mathbf{BD}_n$ (see \Cref{Example: Logics characterised by finite trees}).

\begin{corollary}
    Intuitionistic set theory $\IZF$ has the de Jongh property with respect to the logics $\LC$, $\mathbf{T}_n$ and $\mathbf{BD}_n$. \qed
\end{corollary}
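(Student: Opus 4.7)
The plan is to derive this corollary as an immediate three-fold application of \Cref{Theorem: IZF de Jongh for finite trees}. That theorem reduces the de Jongh property for $\IZF$ with respect to an intermediate logic $\mathbf{J}$ to the purely propositional fact that $\mathbf{J}$ is characterised by some class of finite trees. So the only work is to invoke, for each of $\LC$, $\mathbf{T}_n$ and $\mathbf{BD}_n$, a suitable completeness result exhibiting such a class.

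These characterisations are exactly those collected in \Cref{Example: Logics characterised by finite trees}. First, Dummett's logic $\LC$ is characterised by the class of finite linear orders, which is trivially a subclass of the finite trees (every finite linear order is a finite rooted tree in which every non-maximal node has a unique successor). Second, the Gabbay--de Jongh logic $\mathbf{T}_n$ is, by definition, characterised by the class of finite trees whose non-end-nodes all have exactly $n$ successors, again a class of finite trees. Third, the logic of bounded depth $\mathbf{BD}_n$ is characterised by the class of finite trees of depth at most $n$.

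Thus for each of the three logics $\mathbf{J} \in \{\LC, \mathbf{T}_n, \mathbf{BD}_n\}$ there is a class $\mathcal{K}_{\mathbf{J}}$ of finite trees with $\Logic(\mathcal{K}_{\mathbf{J}}) = \mathbf{J}$, and hence \Cref{Theorem: IZF de Jongh for finite trees} yields $\Logic(\IZF(\mathbf{J})) = \mathbf{J}$, which by \Cref{Definition:de Jongh property} is precisely the de Jongh property of $\IZF$ with respect to $\mathbf{J}$. There is no real obstacle here; the work has already been carried out in the previous subsection, and the present corollary is simply a bookkeeping step that records three concrete instances of the main theorem.
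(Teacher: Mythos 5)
Your proposal is correct and matches the paper exactly: the corollary is stated with an immediate \qed, justified by the preceding remark that $\LC$, $\mathbf{T}_n$ and $\mathbf{BD}_n$ are characterised by classes of finite trees (\Cref{Example: Logics characterised by finite trees}), so that \Cref{Theorem: IZF de Jongh for finite trees} applies directly. Your write-up merely makes this bookkeeping explicit.
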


In \cite{PassmannDJP}, the author proved that bounded constructive set theory has the de Jongh property with respect to every Kripke-complete logic. With the techniques used there, it was (provably) not possible to extend the result to $\CZF$. However, we can now derive a result for $\CZF$.

\begin{corollary}\label{Corollary: CZF has dJP with finite trees}
    Constructive set theory $\CZF$ has the de Jongh property with respect to every intermediate logic $\mathbf{J}$ that is characterised by a class of finite trees.
\end{corollary}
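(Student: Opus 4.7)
The plan is to derive this corollary directly from \Cref{Theorem: IZF de Jongh for finite trees} by exploiting the fact that $\CZF$ is a subtheory of $\IZF$. Concretely, fix an intermediate logic $\mathbf{J}$ characterised by a class of finite trees; we must show that $\Logic(\CZF(\mathbf{J})) = \mathbf{J}$.

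The inclusion $\mathbf{J} \subseteq \Logic(\CZF(\mathbf{J}))$ is immediate from the definition: any substitution instance $\phi^\sigma$ of a $\mathbf{J}$-theorem is $\mathbf{J}$-derivable in the empty set-theoretic context and hence provable in $\CZF(\mathbf{J})$. For the converse, I would argue by contraposition. Suppose $\phi \notin \mathbf{J}$. By \Cref{Theorem: IZF de Jongh for finite trees} applied to $\mathbf{J}$, there exists a substitution $\sigma: \Prop \to \mathcal{L}_\in^\sent$ with $\IZF(\mathbf{J}) \not\vdash \phi^\sigma$. Now observe that every axiom of $\CZF$ is provable in $\IZF$, so $\CZF(\mathbf{J}) \subseteq \IZF(\mathbf{J})$; it follows that $\CZF(\mathbf{J}) \not\vdash \phi^\sigma$ either, and hence $\phi \notin \Logic(\CZF(\mathbf{J}))$.

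There is no real obstacle here: the whole content of the corollary is absorbed into the IZF theorem, and the only thing to verify is the easy syntactic fact $\CZF \subseteq \IZF$ (which, in the usual presentations, is standard). I would simply state the proof as a two-sentence deduction from \Cref{Theorem: IZF de Jongh for finite trees}.
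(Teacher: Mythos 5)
Your proposal is correct and follows exactly the same route as the paper: contraposition, appeal to \Cref{Theorem: IZF de Jongh for finite trees} to obtain a substitution $\sigma$ with $\IZF(\mathbf{J}) \not\vdash \phi^\sigma$, and then the inclusion $\CZF(\mathbf{J}) \subseteq \IZF(\mathbf{J})$ to conclude. No differences worth noting.
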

\begin{proof}
    We have to show that $\mathbf{J} \vdash \phi$ if and only if $\CZF(\mathbf{J}) \vdash \phi^\sigma$ for all $\sigma: \Prop \to \mathcal{L}_\in^\sent$. The implication from left to right is trivial. We prove the other direction by contraposition. So assume that $\mathbf{J} \not \vdash \phi$. By \Cref{Theorem: IZF de Jongh for finite trees}, there is some $\sigma$ such that $\IZF(\mathbf{J}) \not \vdash \phi^\sigma$. As $\CZF \subseteq \IZF$, it follows that $\CZF(\mathbf{J}) \not \vdash \phi^\sigma$.
\end{proof}

We have the following conjecture.

\begin{conjecture}
    Intuitionistic set theory $\IZF$ has the de Jongh property with respect to every intermediate logic.
\end{conjecture}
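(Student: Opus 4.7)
The plan is to extend the techniques of \Cref{Theorem: IZF de Jongh for finite trees} in two stages, corresponding to the two obstructions it presents. Finiteness of $K$ enters both in the verification of the Collection scheme, where the bounding ordinal $\alpha$ is taken over only finitely many pairs, and in the faithfulness argument, via \Cref{Proposition: uniquely determined nodes} and the finite separation of end-nodes by cardinal-arithmetic sentences. More fundamentally, some intermediate logics lack the finite model property, and some are not Kripke-complete at all, so no blended model built directly from a Kripke frame can hope to characterise them.

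First, I would attempt to treat all Kripke-complete intermediate logics. Given such a logic $\mathbf{J}$, fix a class $\mathcal{K}$ of (possibly infinite) rooted Kripke trees with end-nodes characterising $\mathbf{J}$, reducing to trees by a suitable unravelling if necessary. For each $(K,\leq) \in \mathcal{K}$, construct a blended model by choosing the end-models $M_e$ to be transitive models of $\ZFC$ whose ordinal height has cofinality strictly greater than $|K|$, which is exactly what is needed to make the proof of Collection go through for infinite $K$. For faithfulness, the sentences $\psi_n$ of \Cref{Proposition: counting upsets} should be replaced by cardinal-arithmetic analogues $\psi_\kappa$ asserting that there are at most $\kappa$ subsets of $1$, and the end-nodes should be separated by a class-sized family of sentences of the form $2^{\aleph_0} = \aleph_\alpha$ realised differently at different end-nodes via iterated Cohen forcing. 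The uniqueness statement \Cref{Proposition: uniquely determined nodes} must then be strengthened: for an infinite tree, nodes may no longer be determined by $U_v$ together with $E_v$, so one might need to encode, for example, the full order-isomorphism type of $K^{\geq v}$ into the truth sets using more elaborate parametrised families of $\mathcal{L}_\in$-sentences.

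The second stage, for Kripke-incomplete intermediate logics, requires abandoning pure Kripke semantics and moving to a blended algebra-valued construction. The idea is to fix a complete Heyting algebra $H$ whose propositional logic is $\mathbf{J}$, attach classical models of $\ZFC$ to the ``points'' of $H$ (e.g.\ completely prime filters, or a suitable sheaf-theoretic analogue of end-nodes), and define $H$-valued truth by a natural adaptation of the clauses in \Cref{Section: blended models}. The sentences $\psi_n$ of \Cref{Proposition: counting upsets} were, according to the footnote, originally motivated by algebra-valued models \cite{LoewePassmannTarafder2018}, so the counting method should transfer in spirit. The faithfulness task in this setting becomes: given any such $H$ and any assignment $V \colon \Prop \to H$, find $\mathcal{L}_\in$-sentences whose truth values in the blended $H$-valued model realise $V$.

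The hardest part, in my view, will not be the verification of $\IZF$, which should go through provided the end-models are chosen sufficiently closed under the cardinality demands of $K$ or $H$, but rather the faithfulness step in full generality. The finite tree case works because $U_v$ together with $E_v$ forms a complete invariant of nodes; for infinite frames, or for Heyting algebras that are not spatial, this combinatorial data no longer suffices, and it is far from clear which family of $\mathcal{L}_\in$-sentences can distinguish all elements of an arbitrary algebra of truth values. A more modest intermediate goal, already within reach of the first stage above, would be to prove the conjecture for every intermediate logic characterised by a class of trees of cardinality below some fixed large cardinal, and this is where I would focus first.
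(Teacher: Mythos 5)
The statement you are asked about is stated in the paper as a \emph{conjecture}: the paper offers no proof of it, and its actual results stop at intermediate logics characterised by classes of finite trees. So there is no proof in the paper to compare yours against — and your text is, by its own account, a research programme rather than a proof. You correctly identify the two places where finiteness is used (Collection and faithfulness) and the further obstruction of Kripke-incompleteness, but each of the proposed repairs contains a genuine gap that you do not close. Most importantly, the faithfulness step — the heart of the paper's method — is left entirely open in both of your stages: for infinite trees you note yourself that $U_v$ together with $E_v$ is no longer a complete invariant and only gesture at ``more elaborate parametrised families'' of sentences, and for non-spatial Heyting algebras you state outright that it is ``far from clear'' which sentences could realise an arbitrary valuation. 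Since the whole argument for the lower bound $\Logic(\IZF(\mathbf{J})) \subseteq \mathbf{J}$ rests on faithfulness, nothing is proved beyond what the paper already establishes.

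There are also concrete obstacles you pass over too quickly. The blended construction requires a frame in which every node lies below an end-node, with classical models of equal ordinal height attached there; a Kripke-complete intermediate logic need not be characterised by any class of trees with end-nodes, and ``unravelling'' does not produce them (the unravelling of an infinite frame typically has infinite branches with no maximal elements). Your fix for Collection — end-models whose ordinal height has cofinality exceeding $|K|$ — both exceeds the paper's meta-theory ($\ZFC$ plus one countable transitive model, whose height has cofinality $\omega$) and does not obviously suffice, since the ordinals $\alpha_{w,x}$ to be bounded are indexed by pairs $(w,x)$ with $x$ ranging over sets in the domains, not merely by nodes of $K$. Similarly, separating class-many end-nodes by distinct values of $2^{\aleph_0}$ requires class-many mutually generic extensions realising class-many cardinals, again beyond the stated meta-theory. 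Finally, for Kripke-incomplete logics the ``blended algebra-valued'' construction is not defined, let alone shown to satisfy $\IZF$ or to be faithful. A more honest framing of your text is as a (reasonable) plan of attack on an open problem, with the decisive steps still missing.
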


\section{Conclusions}
\label{Section: Conclusions}

In this paper, we defined a class of Kripke models for intuitionistic set theory $\IZF$. We then used these models to prove a range of de Jongh properties for $\IZF$ and $\CZF$.

\begin{question}
    Can we obtain independence results for $\IZF$ with blended models?
\end{question}

\begin{question}
    Is it possible to vary the construction of blended models to provide proper models of $\CZF$ (i.e., models of $\CZF$ that are not also models of $\IZF$)?
\end{question}

\printbibliography

\end{document}